\DeclareFontFamily{U}{BOONDOX-calo}{\skewchar\font=45 }
\DeclareFontShape{U}{BOONDOX-calo}{m}{n}{
    <-> s*[1.05] BOONDOX-r-calo}{}
\DeclareFontShape{U}{BOONDOX-calo}{b}{n}{
    <-> s*[1.05] BOONDOX-b-calo}{}
\DeclareMathAlphabet{\mathcalboondox}{U}{BOONDOX-calo}{m}{n}
\SetMathAlphabet{\mathcalboondox}{bold}{U}{BOONDOX-calo}{b}{n}
\DeclareMathAlphabet{\mathbcalboondox}{U}{BOONDOX-calo}{b}{n}
\titlespacing\section{0pt}{12pt plus 4pt minus 2pt}{3pt plus 2pt minus 2pt}
\titlespacing\subsection{0pt}{12pt plus 4pt minus 2pt}{3pt plus 2pt minus 2pt}
\titlespacing\subsubsection{0pt}{12pt plus 4pt minus 2pt}{3pt plus 2pt minus 2pt}
\titleformat{\section}[block]{\Large\bfseries}{\scshape\roman{section}}{1em}{}
\titleformat{\subsection}[block]{\large\bfseries}
                         {\scshape\roman{section}.\roman{subsection}}{0.7em}{}
\newcommand	\email[1]	{\href{mailto:#1}{\texttt{#1}}}
\newcommand	\vertexSet	      {\mathcalboondox V}
\newcommand	\edgeSet          {\mathcalboondox E}
\newcommand \secondVertexSet  {\mathcalboondox U}
\newcommand		\Matrix[1]	  {\left(#1_{ij}\right)}
\newcommand     \mat          \mathbf
\newenvironment{blockmatrix2} {\left(\begin{array}{c|c}}{\end{array}\right)}
\newenvironment{blockmatrix4} {\left(\begin{array}{c|c|c|c}}{\end{array}\right)}
\let\leq\relax
\newcommand\leq\leqslant
\let\geq\relax
\newcommand\geq\geqslant
\newcommand     \Reals  {{\mathds R}}
\newcommand     \Nats   {{\mathds N}}
\newcommand* \defeq		{\mathrel{\vcenter{\baselineskip0.5ex\lineskiplimit0pt\hbox{\scriptsize.}\hbox{\scriptsize.}}}=}
\DeclareMathOperator\spec{spec}
\DeclareMathOperator\Span{span}
\DeclareMathOperator\Main{Main}
\DeclareMathOperator\rank{rank}
\def \eigsp {\mathop{\scalebox{1.2}{\text{\fontfamily{pzc}\selectfont E}}}{\hspace{-1pt}}}
\DeclareMathOperator\CDC{CDC}
\newcommand \tfi {\stackrel{\text{\scriptsize TF}}{\simeq}}
\newcommand \transpose  {{}^{\scalebox{0.6}{\sffamily T}}}
\def\arraystretch{1.5}
\let \plaincite \cite
\let \cite \relax
\newcommand \cite[1] {\textsuperscript{\plaincite{#1}}}
\let\save@mathaccent\mathaccent
\newcommand*\if@single[3]{%
    \setbox0\hbox{${\mathaccent"0362{#1}}^H$}%
    \setbox2\hbox{${\mathaccent"0362{\kern0pt#1}}^H$}%
    \ifdim\ht0=\ht2 #3\else #2\fi
}
\newcommand*\rel@kern[1]{\kern#1\dimexpr\macc@kerna}
\newcommand*\widebar [1]{\@ifnextchar^{{\wide@bar{#1}{0}}}{\wide@bar{#1}{1}}}
\newcommand*\wide@bar[2]{\if@single{#1}{\wide@bar@{#1}{#2}{1}}{\wide@bar@{#1}{#2}{2}}}
\newcommand*\wide@bar@[3]{%
    \begingroup
    \def\mathaccent##1##2{%
        \let\mathaccent\save@mathaccent
        \if#32 \let\macc@nucleus\first@char \fi
        \setbox\z@\hbox{$\macc@style{\macc@nucleus}_{}$}%
        \setbox\tw@\hbox{$\macc@style{\macc@nucleus}{}_{}$}%
        \dimen@\wd\tw@
        \advance\dimen@-\wd\z@
        \divide\dimen@ 3
        \@tempdima\wd\tw@
        \advance\@tempdima-\scriptspace
        \divide\@tempdima 10
        \advance\dimen@-\@tempdima
        \ifdim\dimen@>\z@ \dimen@0pt\fi
        \rel@kern{0.6}\kern-\dimen@
        \if#31
        \overline{\rel@kern{-0.6}\kern\dimen@\macc@nucleus\rel@kern{0.4}\kern\dimen@}%
        \advance\dimen@0.4\dimexpr\macc@kerna
        \let\final@kern#2%
        \ifdim\dimen@<\z@ \let\final@kern1\fi
        \if\final@kern1 \kern-\dimen@\fi
        \else
        \overline{\rel@kern{-0.6}\kern\dimen@#1}%
        \fi
    }%
    \macc@depth\@ne
    \let\math@bgroup\@empty \let\math@egroup\macc@set@skewchar
    \mathsurround\z@ \frozen@everymath{\mathgroup\macc@group\relax}%
    \macc@set@skewchar\relax
    \let\mathaccentV\macc@nested@a
    \if#31
    \macc@nested@a\relax111{#1}%
    \else
    \def\gobble@till@marker##1\endmarker{}%
    \futurelet\first@char\gobble@till@marker#1\endmarker
    \ifcat\noexpand\first@char A\else
    \def\first@char{}%
    \fi
    \macc@nested@a\relax111{\first@char}%
    \fi
    \endgroup
}
\newtheoremstyle{plain}
    {4pt}      
    {4pt}      
    {\itshape}  
    {0pt}       
    {\bfseries} 
    {.}         
    {5pt plus 1pt minus 1pt} 
    {}          
\theoremstyle{plain}
\newtheorem{lemma}{Lemma}[section]
\newtheorem{theorem}[lemma]{Theorem}
\newtheorem{prop}[lemma]{Proposition}
\newtheorem{corollary}[lemma]{Corollary}
\newtheorem{question}[lemma]{Question}
\newtheoremstyle{definition}
    {4pt}          
    {4pt}          
    {\normalfont}  
    {0pt}          
    {\bfseries}    
    {.}            
    {5pt plus 1pt minus 1pt} 
    {}             
\theoremstyle{definition}
\newtheorem{definition}[lemma]{Definition}
\newtheoremstyle{remark}
    {4pt}          
    {4pt}          
    {\normalfont}  
    {0pt}          
    {\itshape}     
    {.}            
    {5pt plus 1pt minus 1pt} 
    {}             
\theoremstyle{remark}
\newtheorem{example}[lemma]{Example}
\newtheorem{counterexample}[lemma]{Counterexample}
\title{
	{\bfseries\scshape\Large
        On the Walks and Bipartite Double Coverings of Graphs with the same Main Eigenspace}
}
\author{
	\href{http://drmenguin.com}
	{\scshape Luke Collins}			\\[-8pt]
	{\normalsize\email{luke.collins@um.edu.mt}}
	\and
	\href{http://staff.um.edu.mt/isci1/}
	{\scshape Irene Sciriha}		\\[-8pt]
	{\normalsize\email{irene.sciriha-aquilina@um.edu.mt}}
}
\date {
	\href{https://www.um.edu.mt/science/maths}
	{Department of Mathematics},	\\
	\href{http://www.um.edu.mt/}
	{University of Malta},			\\
	Msida, Malta					\\[8pt]
	\godsavethequeen\today{} 
}
\begin{document}
	\maketitle

	\begin{abstract}
		\noindent
        The main eigenvalues of a graph $G$ are those eigenvalues of the $(0,1)$-adjacency matrix $\mat A$ having a corresponding eigenvector not orthogonal to $\mat j = (1,\dots,1)$. The CDC of a graph $G$ is the direct product $G\times K_2$.
        The main eigenspace of $\mat A$ is generated by the principal main eigenvectors and is the same as the image of the walk matrix. A hierarchy of properties of pairs of graphs is established in view of their CDC's, walk matrices, main eigenvalues, eigenvectors and eigenspaces. 
        We determine by algorithm that there are 32 pairs of non-isomorphic graphs on at most 8 vertices which have the same CDC.

		\vspace{1em}\noindent
		\textbf{Keywords:} Eigenvalues, walks, walk matrix, main eigenspace, canonical (bipartite) double covering, TF-isomorphism.
	\end{abstract}

	\section{Introduction}
	A graph of order $n$ is a pair of sets $G = (\vertexSet, \edgeSet)$ where $\vertexSet = \{1, \dots, n\}$ is called the set of vertices, and $\edgeSet \subseteq \{\{u,v\}: u, v \in \vertexSet \ \text{and}\ u \neq v\}$ is called the set of edges. (We consider graphs which are simple; that is, graphs which are undirected, without multiple edges or loops.) A $k$-walk in a graph $G$ is a $k$-tuple $(u_0, u_1, \dots, u_k) \in \vertexSet^{k+1}$ such that $\{u_{i-1}, u_{i}\} \in \edgeSet$ for all $1 \leq i \leq k$. 

    The adjacency matrix of a graph $G$, denoted by $\mat A(G)$, or simply $\mat A$ where the context is clear, is the symmetric $n\times n$ matrix $\Matrix a$, where $a_{ij} = 1$ if $\{i,j\}\in \edgeSet$, and $a_{ij} = 0$ otherwise. We use terminology for a graph $G$ and its adjacency matrix $\mat A$ interchangeably, since the graph $G$ is determined, up to relabelling of the vertices, by $\mat A$. For example, the eigenvalues and eigenvectors of a graph $G$ are respectively those of the matrix $\mat A$. The spectrum $\spec(G)$ of a graph $G$ is the multiset consisting of the $s$ distinct eigenvalues  $\mu_1, \dots, \mu_s$, each occurring $m(\mu_i)$ times, $1\leq i \leq s$; where the multiplicity $m(\mu_i)$ is the number of times that $\mu_i$ is repeated as a root of the characteristic polynomial $\det(\lambda\mat I - \mat A)$. Since $\mat A$ is real-symmetric, we also have that $m(\mu_i)$ is the dimension of the eigenspace $\eigsp_G(\mu_i)$ associated with $\mu_i$, where  $\eigsp_G(\mu_i) = \{\mat x\in\Reals^n : \mat A\mat x = \mu_i \mat x\}$, and $n=|\vertexSet|$. Spectral decomposition of $\mat A$ yields
    \begin{equation}
        \label{eqn:specdec}
        \mat A = \sum_{i=1}^s\mu_i\mat P_i,
    \end{equation}
    where $\mat P_i\colon\Reals\to\eigsp_G(\mu_i)$ is the orthogonal projection onto the eigenspace for $\mu_i$, $1 \leq i \leq s$.

    The entry $a_{ij}^{(k)}$ of the matrix $\mat A^k$ is the number of walks of length $k$ starting from vertex $i$ to vertex $j$. If $\mat j$ denotes the all-ones $n\times1$ column $(1,\dots,1)$, then the $i$th entry of $\mat A^k\mat j$ is the total number of walks of length $k$ starting from vertex $i$. The $n\times k$ matrix whose $k$ columns are $\mat A^{i-1}\mat j$ for $i=1,2,\dots,k$ is called the $k$-walk matrix of $G$, denoted by $\mat W_G(k)$:
    \[\mat W_G(k) = \begin{pmatrix}
    |&|&|&&|\\
    \mat j & \mat A\mat j & \mat A^2\mat j & \cdots & \mat A^{k-1}\mat j\\
    |&|&|&&|
    \end{pmatrix}. \]
    The eigenvalues $\mu_1, \mu_2, \dots, \mu_p$ of $G$ ($1 \leq p \leq n$) having an associated eigenvector \emph{not} orthogonal to $\mat j$ (i.e. $\langle\mat x,\mat j\rangle \neq 0$) are said to be \emph{main}. The remaining distinct eigenvalues $\mu_{p+1}, \mu_{p+2}, \dots, \mu_s$ ($s\leq n$) are \emph{non-main}. Walks and main eigenvalues are closely related---it turns out that the number of walks of length $k$ in $G$ is given by
    \[N_k = \sum_{i=1}^p \big\| \mat P_i\,\mat j \big\|^2\,{\mu_i}^k = c_1{\mu_1}^k+c_2{\mu_2}^2 + \cdots + c_p{\mu_p}^k, \]
    where $\mu_i$, $i=1,\dots, p$ are the main eigenvalues of $G$, $\mat P_i$ is as in \cref{eqn:specdec}, and $c_i \defeq \| \mat P_i\,\mat j \|^2$ are constants independent of the number $k$ (\plaincite[p.\ 44]{CvetkovicSpectra}). The eigenvector $\mat P_i\,\mat j$ of $\mu_i$ is called the \emph{principal main eigenvector} corresponding to $\mu_i$.

    A pair of graphs $G$ and $H$ are \emph{comain} if they have the same set of main eigenvalues (ignoring multiplicity). We denote the \emph{main eigenspace}, that is, the space generated by all principal main eigenvectors, by $\Main(G)$. Thus if $\mu_i$, $i=1,\dots,p$ are the main eigenvalues of $G$, then
    \begin{equation}
        \label{def:mainG}
        \Main(G) = \Span\{\mat P_1\,\mat j,\dots,\mat P_p\,\mat j\}.
    \end{equation}
    
    The disjoint union of the graphs $G_i = (\vertexSet_i, \edgeSet_i)$, $1 \leq i \leq k$, where each $G_i$  has order $n_i$, 
    denoted by $G_1 \cupdot \cdots \cupdot G_k$ or $\bigcupdot_{i=1}^k G_i$, is the graph $(\vertexSet, \edgeSet)$ of order $n_1+\cdots+n_k$ with vertex set $\vertexSet = \bigcup_{i=1}^k\vertexSet_i\times\{i\}$ and edge set \[\edgeSet = \{\{(u,i),(v,i)\} : \{u,v\} \in \edgeSet_i \}.\]

    \subsection{Overview of the Paper}
    In this paper, we provide a full characterisation of graphs in view of the following: their main eigenvalues, their main eigenspace, their walk matrix, and their canonical double covering, as illustrated in \cref{fig:charaterisation}.

    In \cref{sec:cdcs}, we define canonical double coverings (CDCs) and prove some basic results about them. In \cref{sec:walksAndSpaces}, we show how the walk matrix is related to graphs with the same CDC and with the same main eigenspace. In \cref{sec:tfisom}, we define TF-isomorphisms and prove that graphs with the same CDC are equivalent to TF-isomorphic graphs. In \cref{sec:hierarchy}, we present the hierarchy which relates the various common properties which pairs of graphs can have, such as having the same main eigenspace, having the same walk matrix, and having the same CDC. We also give counterexamples to various natural questions which arise in our discussion in cases where the converse of a result is false.
    
    
    \section{Canonical Double Coverings}
    \label{sec:cdcs}
    The \emph{canonical double covering} (also referred to as bipartite double covering in the literature) of a graph $G = (\vertexSet,\edgeSet)$ of order $n$, denoted by $\CDC(G)$, is a graph $G' = (\vertexSet',\edgeSet')$ of order $2n$ where $\vertexSet' = \vertexSet \times\{0,1\}$, and
    \[\edgeSet' = \big\{\{(u,0),(v,1)\}, \{(u,1),(v,0)\} : \{u,v\} \in \edgeSet \big\}.\]
    In other words, $\CDC(G)$ is obtained by producing two copies of the vertex set, and replacing edges $\{u,v\}$ in the original graph by edges from the first copy to the second copy, and vice versa (see \cref{fig:cdcEg} for examples). Clearly, $\CDC(G)$ is always bipartite, with partite sets $\vertexSet\times\{0\}$ and $\vertexSet\times\{1\}$.

    \begin{figure}
        \centering
        \tikzstyle{every node}=[circle, draw=black, fill=white, inner sep=0pt, minimum width=12pt, line width = 0.3mm]
        \begin{minipage}[b]{0.25\textwidth}
            \centering
            \begin{tikzpicture}[thick, scale=0.8]
                \draw  (0,0)    node (1) {\footnotesize \textsf 1};
                \draw  (1.5,0)  node (2) {\footnotesize \textsf 2};
                \draw  (60:1.5) node (3) {\footnotesize \textsf 3};

                \draw[line width = 0.3mm]{
                    (1) -- (2) -- (3) -- (1)
                };
            \end{tikzpicture}
            \vfil~
            \caption*{$C_3$}
        \end{minipage}
        \hfil
        \begin{minipage}[b]{0.74\textwidth}
            \centering
            \begin{tikzpicture}[thick, scale=0.8, square/.style={regular polygon,regular polygon sides=4, inner sep=.1em}]

                \newcommand \sidelength {1.5};
                \newcommand \shiftx {3.5};
                \newcommand \shifty {0};

                \draw  (0,0)            node (1) {\footnotesize \textsf 1};
                \draw  (\sidelength,0)  node (2) {\footnotesize \textsf 2};
                \draw  (60:\sidelength) node (3) {\footnotesize \textsf 3};

                \coordinate (shift) at (\shiftx, \shifty);
                \draw  ($(shift)+(0,0)$)            node[square] (1') {\footnotesize \textsf 1};
                \draw  ($(shift)+(\sidelength,0)$)  node[square] (2') {\footnotesize \textsf 2};
                \draw  ($(shift)+(60:\sidelength)$) node[square] (3') {\footnotesize \textsf 3};

                \draw[line width = 0.3mm]{
                    (1)  to[bend right=45] (2')
                    (1)  to[bend left=90]  (3')
                    (2)  to                (1')
                    (2)  to[bend left]     (3')
                    (3)  to                (1')
                    (3)  to[bend left=10]  (2')
                };
            \end{tikzpicture}
            \quad\raisebox{1.2cm}{\Large$\equiv$}\quad
            \begin{tikzpicture}[thick, scale=0.8, square/.style={regular polygon,regular polygon sides=4, inner sep=.1em}]
                \draw  (180:1.5)  node (1)          {\footnotesize \textsf 1};
                \draw  (120:1.5)  node[square] (2') {\footnotesize \textsf 2};
                \draw  (60:1.5)   node (3)          {\footnotesize \textsf 3};
                \draw  (0:1.5)    node[square] (1') {\footnotesize \textsf 1};
                \draw  (-60:1.5)  node (2)          {\footnotesize \textsf 2};
                \draw  (-120:1.5) node[square] (3') {\footnotesize \textsf 3};

                \draw[line width = 0.3mm]{
                    (1) -- (2') -- (3) -- (1') -- (2) -- (3') -- (1)
                };
            \end{tikzpicture}
            \caption*{$\CDC(C_3) \simeq C_6$}
        \end{minipage}
         \begin{minipage}[b]{0.25\textwidth}
            \centering
            \begin{tikzpicture}[thick, scale=0.8]
                \draw  (60:1.5)             node (1) {\footnotesize \textsf 1};
                \draw  ($(1.5,0)+(60:1.5)$) node (2) {\footnotesize \textsf 2};
                \draw  (0,0)                node (3) {\footnotesize \textsf 3};
                \draw  (1.5,0)              node (4) {\footnotesize \textsf 4};
                \draw  (2*1.5,0)            node (5) {\footnotesize \textsf 5};

                \draw[line width = 0.3mm]{
                    (1) edge (3) edge (4) edge (5)
                    (2) edge (3) edge (4) edge (5)
                };
            \end{tikzpicture}
            \vfil~
            \caption*{$K_{2,3}$}
        \end{minipage}
        \hfil
        \begin{minipage}[b]{0.74\textwidth}
            \centering
            \begin{tikzpicture}[thick, scale=0.8, square/.style={regular polygon,regular polygon sides=4, inner sep=.1em}]

                \newcommand \shiftx {1.5};
                \newcommand \shifty {-3};

                \draw  (60:1.5)             node (1) {\footnotesize \textsf 1};
                \draw  ($(1.5,0)+(60:1.5)$) node (2) {\footnotesize \textsf 2};
                \draw  (0,0)                node (3) {\footnotesize \textsf 3};
                \draw  (1.5,0)              node (4) {\footnotesize \textsf 4};
                \draw  (2*1.5,0)            node (5) {\footnotesize \textsf 5};

                \coordinate (shift) at (\shiftx, \shifty);
                \draw  ($(shift) + (60:1.5)$)         node[square] (1') {\footnotesize \textsf 1};
                \draw  ($(shift) + (1.5,0)+(60:1.5)$) node[square] (2') {\footnotesize \textsf 2};
                \draw  ($(shift) +(0,0)$)             node[square] (3') {\footnotesize \textsf 3};
                \draw  ($(shift) +(1.5,0)$)           node[square] (4') {\footnotesize \textsf 4};
                \draw  ($(shift) +(2*1.5,0)$)         node[square] (5') {\footnotesize \textsf 5};

                \draw[line width = 0.3mm]{
                    (1') edge (3) edge (4) edge (5)
                    (2') edge (3) edge (4) edge (5)
                    (1) edge[bend right=20] (3') edge[bend right=25] (4') .. controls (5,3.5) and (4.5,-2) .. (5')
                    (2) edge (3') edge (4') edge[bend left] (5')
                };
            \end{tikzpicture}
            \quad\raisebox{1.5cm}{\Large$\equiv$}\quad
            \begin{tikzpicture}[thick, scale=0.8, square/.style={regular polygon,regular polygon sides=4, inner sep=.1em}]

                \newcommand \shiftx {0};
                \newcommand \shifty {-2.5};

                \draw  (60:1.5)             node (1) {\footnotesize \textsf 1};
                \draw  ($(1.5,0)+(60:1.5)$) node (2) {\footnotesize \textsf 2};
                \draw  (0,0)                node[square] (3) {\footnotesize \textsf 3};
                \draw  (1.5,0)              node[square] (4) {\footnotesize \textsf 4};
                \draw  (2*1.5,0)            node[square] (5) {\footnotesize \textsf 5};

                \coordinate (shift) at (\shiftx, \shifty);
                \draw  ($(shift) + (60:1.5)$)         node[square] (1') {\footnotesize \textsf 1};
                \draw  ($(shift) + (1.5,0)+(60:1.5)$) node[square] (2') {\footnotesize \textsf 2};
                \draw  ($(shift) +(0,0)$)             node         (3') {\footnotesize \textsf 3};
                \draw  ($(shift) +(1.5,0)$)           node         (4') {\footnotesize \textsf 4};
                \draw  ($(shift) +(2*1.5,0)$)         node         (5') {\footnotesize \textsf 5};

                \draw[line width = 0.3mm]{
                    (1) edge (3) edge (4) edge (5)
                    (2) edge (3) edge (4) edge (5)
                    (1') edge (3') edge (4') edge (5')
                    (2') edge (3') edge (4') edge (5')
                };
            \end{tikzpicture}
            \caption*{$\CDC(K_{2,3}) \simeq K_{2,3} \cupdot K_{2,3}$}
        \end{minipage}
        \caption{Canonical double coverings of $C_3$ and $K_{2,3}$, where vertices $(v,0)$ are represented by circle nodes, and vertices $(v,1)$ by square nodes.}
        \label{fig:cdcEg}
    \end{figure}

    If the vertices in $\vertexSet\times\{0\}$ are given the first $n$ labels, it is not hard to see that the adjacency matrix of $\CDC(G)$ is given by
    \[\mat A(\CDC(G)) = \begin{blockmatrix2}
        \mat O & \mat A(G)\\
        \hline
        \mat A(G) & \mat O
    \end{blockmatrix2}.\]
    This is actually equivalent to the direct product with $K_2$, i.e., $\CDC(G) = G \times K_2$. It can also be obtained as the NEPS of $G$ and $K_2$ with basis $\{(1,1)\}$.\cite{CvetkovicSpectra} Consequently, the eigenvalues of $\CDC(G)$ are those of $G$ and their negatives; i.e.
    \[\spec(\CDC(G)) = \pm\spec(G).\]

    The following result distinguishes between bipartite and non-bipartite connected graphs.
    \begin{lemma}
        Let $G$ be a connected graph. Then $G$ is bipartite if and only if $\CDC(G)$ is disconnected. Moreover, if $G$ is bipartite, then $\CDC(G) \simeq G \cupdot G$.
    \end{lemma}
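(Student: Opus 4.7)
The plan is to prove the two directions of the biconditional separately, with both arguments hinging on how walks in $G$ lift to walks in $\CDC(G)$: an edge $\{u,v\}$ of $G$ gives rise to the edges $\{(u,0),(v,1)\}$ and $\{(u,1),(v,0)\}$ in $\CDC(G)$, so following an edge of $G$ flips the second coordinate. Consequently, a walk of length $k$ in $G$ starting at $u$ lifts, for each choice of initial fibre $(u,i)$, to a walk in $\CDC(G)$ ending at $(v, i+k \bmod 2)$. This observation is the whole game.

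For the forward direction, suppose $G$ is bipartite with parts $X$ and $Y$. I would define
\[ A \defeq (X\times\{0\})\cup(Y\times\{1\}), \qquad B \defeq (X\times\{1\})\cup(Y\times\{0\}), \]
which partition $\vertexSet\times\{0,1\}$. Given any edge $\{u,v\}\in\edgeSet$ with (WLOG) $u\in X$, $v\in Y$, the two induced edges $\{(u,0),(v,1)\}$ and $\{(u,1),(v,0)\}$ lie wholly within $A$ and $B$ respectively, so no edge of $\CDC(G)$ crosses $A$--$B$; this already proves $\CDC(G)$ is disconnected. For the ``moreover'' clause, the map $f\colon\vertexSet\to A$ sending $u\in X$ to $(u,0)$ and $u\in Y$ to $(u,1)$ is a bijection, and the edge analysis above shows it is a graph isomorphism from $G$ onto the subgraph induced on $A$. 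The analogous map is an isomorphism onto $B$, giving $\CDC(G)\simeq G\cupdot G$.

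For the reverse direction, I would prove the contrapositive: if $G$ is connected and non-bipartite, then $\CDC(G)$ is connected. Non-bipartiteness supplies an odd cycle $C$ in $G$. The key lemma I would establish is that for every pair $u,v\in\vertexSet$, there exist walks from $u$ to $v$ of both parities: starting from any $u$--$v$ walk $W$ (which exists by connectedness), concatenate a path from $v$ to a vertex $w$ of $C$, traverse $C$ once, then return; the result is a $u$--$v$ walk whose length differs from $|W|$ by an odd number. Then given any two vertices $(u,i),(v,j)\in\vertexSet\times\{0,1\}$ of $\CDC(G)$, choose a walk in $G$ from $u$ to $v$ whose length has the parity of $j-i$ and lift it starting at $(u,i)$; by the lifting principle it terminates at $(v,j)$, showing $\CDC(G)$ is connected.

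The main obstacle is the reverse direction, where one must exploit non-bipartiteness quantitatively (walks of both parities between every pair), rather than merely topologically. Everything else — the lifting principle and the explicit partition in the bipartite case — is essentially bookkeeping on the definition of $\CDC(G)$.
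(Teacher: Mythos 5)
Your proposal is correct. The forward direction and the ``moreover'' clause are essentially the paper's argument in different notation: your sets $A$ and $B$ are exactly the paper's $\vertexSet_1\cup\vertexSet_2'$ and $\vertexSet_2\cup\vertexSet_1'$, and in both proofs the observation that each lifted edge stays inside one of these two sets gives disconnectedness together with two induced copies of $G$. Where you genuinely diverge is the reverse direction. The paper assumes $\CDC(G)$ is connected, takes a path from $(v_1,0)$ to $(v_1,1)$, and projects it to an odd closed walk in $G$ to conclude $G$ is not bipartite; but ``$\CDC(G)$ connected $\Rightarrow$ $G$ not bipartite'' is just the contrapositive of the forward implication, so as written that paragraph does not supply the implication ``$\CDC(G)$ disconnected $\Rightarrow$ $G$ bipartite''. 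You prove precisely the missing contrapositive: $G$ connected and non-bipartite $\Rightarrow$ $\CDC(G)$ connected, via the quantitative lemma that an odd cycle yields $u$--$v$ walks of both parities for every pair $u,v$, which then lift (using the parity-flipping property of edges of $\CDC(G)$) to walks joining any two vertices of $\CDC(G)$. This route costs a little extra bookkeeping (the detour through the odd cycle and the parity count), but it is the argument that actually completes the equivalence, it works with walks throughout (avoiding the paper's slight abuse in calling the projected closed walk a cycle), and it makes visible exactly where the connectedness hypothesis is used --- for disconnected non-bipartite graphs such as $K_3\cupdot K_3$ the implication ``$\CDC$ disconnected $\Rightarrow$ bipartite'' fails.
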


    \begin{proof}
        Let $G$ be bipartite, and let $\secondVertexSet_1$, $\secondVertexSet_2$ be the partite sets of $G$. Consider $\CDC(G)$, and let $\vertexSet_i = \{(v,0) : v \in \secondVertexSet_i\}$ and $\vertexSet_i' = \{(v,1) : v\in \secondVertexSet_i\}$ for $i=1,2$ be the corresponding partite sets and their copies in $\CDC(G)$. Since edges in $G$ are only from $\secondVertexSet_1$ to $\secondVertexSet_2$, then edges in $\CDC(G)$ are only either from $\vertexSet_1$ to $\vertexSet_2'$ or $\vertexSet_2$ to $\vertexSet_1'$. Therefore $\CDC(G)$ is disconnected with components being precisely the induced subgraphs on $\vertexSet_1 \cup \vertexSet_2'$ and $\vertexSet_2 \cup \vertexSet_1'$, both of which are isomorphic to $G$.

        For the converse, suppose $\CDC(G)$ is connected. Let $v_1 \equiv (v_1,0)$ and $v_1'\equiv (v_1,1)$ denote the two copies in $\CDC(G)$ of a vertex $v_1$ in $G$. Since $\CDC(G)$ is connected, there is a path $v_1\to v_2'\to v_3\to\cdots\to v_{k-1}' \to v_k\to v_1'$ joining $v_1$ to $v_1'$, where the vertices alternate from one copy of the vertex set to another. But this corresponds to the odd cycle $v_1 \to v_2 \to v_3 \to \cdots \to v_k \to v_1$ in $G$. Hence $G$ is not bipartite.
    \end{proof}

    Next we prove that the $\CDC$ operation is additive with respect to disjoint union.

    \begin{lemma}
        \label{lemma:distributivityCDC}
        Let $G$ be disconnected with components $G_1, \dots, G_k$, so that $G = G_1\cupdot \cdots \cupdot G_k$ and $G_i$ is connected for $1 \leq i \leq k$. Then
        \[\CDC(G) = \CDC\left(\bigcupdot_{i=1}^k G_i\right) \simeq \bigcupdot_{i=1}^k\CDC(G_i).\]
    \end{lemma}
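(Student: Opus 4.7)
The plan is to argue directly from the definition of $\CDC$ that edges live only ``within components''. First I would write $\vertexSet = \bigcup_{i=1}^{k} \vertexSet_i$ as a disjoint union in the obvious way, and observe that since $G = \bigcupdot_{i=1}^k G_i$, every edge $\{u,v\} \in \edgeSet$ satisfies $u,v \in \vertexSet_i$ for some (necessarily unique) $i$. By the definition of the canonical double covering, every edge of $\CDC(G)$ has the form $\{(u,0),(v,1)\}$ with $\{u,v\} \in \edgeSet$, so both of its endpoints lie in $\vertexSet_i \times \{0,1\}$ for the same $i$. In particular, there are no edges of $\CDC(G)$ between $\vertexSet_i\times\{0,1\}$ and $\vertexSet_j\times\{0,1\}$ for $i\ne j$.

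Next I would identify the induced subgraph. For each fixed $i$, the induced subgraph of $\CDC(G)$ on $\vertexSet_i\times\{0,1\}$ has vertex set $\vertexSet_i\times\{0,1\}$ and edges exactly those $\{(u,0),(v,1)\}$ with $\{u,v\}\in\edgeSet_i$, which is precisely the definition of $\CDC(G_i)$. Since the sets $\vertexSet_i \times \{0,1\}$ partition the vertex set $\vertexSet\times\{0,1\}$ of $\CDC(G)$, and no edges cross the partition, we conclude $\CDC(G) \simeq \bigcupdot_{i=1}^k \CDC(G_i)$.

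For a quick sanity check via the matrix viewpoint, I would note that after labelling the vertices of $G$ block-wise, $\mat A(G)$ is block diagonal with diagonal blocks $\mat A(G_i)$, so
\[
\mat A(\CDC(G)) \;=\; \begin{blockmatrix2} \mat O & \mat A(G) \\ \hline \mat A(G) & \mat O \end{blockmatrix2}
\]
is a $2n\times 2n$ matrix which, after conjugating by the permutation that interleaves the ``$0$-copy'' and ``$1$-copy'' of each $\vertexSet_i$, becomes block diagonal with diagonal blocks $\mat A(\CDC(G_i))$; this is exactly $\mat A\bigl(\bigcupdot_{i=1}^k \CDC(G_i)\bigr)$.

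There is no real obstacle here beyond bookkeeping: the only substantive point is that the statement asserts isomorphism $\simeq$ rather than equality of labelled graphs, which is necessary because the canonical ordering of vertices in $\CDC(G)$ (all ``$0$-copies'' then all ``$1$-copies'') differs from the canonical ordering in $\bigcupdot_i \CDC(G_i)$ (the two copies of $\vertexSet_i$ grouped together), and the two orderings are related by the interleaving permutation described above.
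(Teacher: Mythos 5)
Your proof is correct, but it takes a different route from the paper. The paper argues at the level of adjacency matrices: it writes out $\mat A\bigl(\CDC(G)\bigr)$ and $\mat A\bigl(\CDC(G_1)\cupdot\CDC(G_2)\bigr)$ in block form for the case $k=2$, exhibits an explicit permutation matrix $\mat P$ with $\mat P\transpose \mat A\bigl(\CDC(G)\bigr)\mat P = \mat A\bigl(\CDC(G_1)\cupdot\CDC(G_2)\bigr)$, and then handles general $k$ by induction. You instead argue directly at the vertex--edge level: since every edge of $G$ lies inside a single component $\vertexSet_i$, every edge of $\CDC(G)$ lies inside $\vertexSet_i\times\{0,1\}$, the induced subgraph on $\vertexSet_i\times\{0,1\}$ is exactly $\CDC(G_i)$, and these pieces partition the vertex set with no crossing edges, giving the isomorphism for all $k$ at once with no induction. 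Your combinatorial argument is arguably cleaner and more general in one step, and your closing remark correctly identifies why only $\simeq$ (not equality of labelled graphs) can be claimed; the paper's matrix formulation has the advantage of producing the relabelling permutation explicitly and of matching the algebraic style used later (e.g.\ in \cref{thm:QGR=H}), and indeed your ``sanity check'' paragraph essentially reproduces the paper's proof, generalised from $k=2$ to arbitrary $k$ via the interleaving permutation.
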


    \begin{proof}
        We give the proof for $k=2$, the general case follows by induction on $k$. If $G$ has components $G_1$ and $G_2$, then labelling the vertices of $G_1$ first gives us that the adjacency matrix of $G$ has block form
        \begingroup
        \renewcommand{\arraystretch}{1.4}
        \[\mat A(G) = \begin{blockmatrix2}
            \mat A(G_1) & \mat O\\
            \hline
            \mat O      & \mat A(G_2)
        \end{blockmatrix2},\]
        and so
        \begin{equation}
        \label{eq:cdcForm}
          \mat A\big(\CDC(G)\big) = \begin{blockmatrix4}
                \multicolumn{2}{c|}{\multirow{2}{*}{$\mat O$}}  & \mat A(G_1)           & \mat O                    \\ \cline{3-4}
                \multicolumn{2}{c|}{}                       & \mat O                & \mat A(G_2)               \\ \hline
                \mat A(G_1)             & \mat O                    & \multicolumn{2}{c}{\multirow{2}{*}{$\mat O$}} \\ \cline{1-2}
                \mat O                  & \mat A(G_2)               & \multicolumn{2}{c}{}
           \end{blockmatrix4}.
       \end{equation}
        On the other hand, for $i = 1,2$, we have
        \[\mat A\big(\CDC(G_i)\big) = \begin{blockmatrix2}
            \mat O      & \mat A(G_i)\\
            \hline
            \mat A(G_i) & \mat O
        \end{blockmatrix2}, \]
        so that
        \begin{equation}
            \label{eq:unionForm}
            \mat A\big(\CDC(G_1) \cupdot \CDC(G_2)\big) =
            \begin{blockmatrix4}
                \mat O                  & \mat A(G_1)               & \multicolumn{2}{c}{\multirow{2}{*}{$\mat O$}}     \\ \cline{1-2}
                \mat A(G_1)             & \mat O                    & \multicolumn{2}{c}{}                          \\ \hline
                \multicolumn{2}{c|}{\multirow{2}{*}{$\mat O$}}  & \mat O                    & \mat A(G_2)               \\ \cline{3-4}
                \multicolumn{2}{c|}{}                       & \mat A(G_2)               & \mat O
            \end{blockmatrix4}.
        \end{equation}
        \endgroup
        When considering \cref{eq:cdcForm,eq:unionForm} It is not hard to see that the permutation matrix \[\mat P = \begin{blockmatrix4}
            \mat I_1 & \mat O & \mat O & \mat O \\
            \hline
            \mat O & \mat O & \mat I_2 & \mat O \\
            \hline
            \mat O & \mat I_1 & \mat O & \mat O \\
            \hline
            \mat O & \mat O & \mat O & \mat I_2
        \end{blockmatrix4},\]
        where $\mat I_i$ denotes the $|\vertexSet(G_i)|\times|\vertexSet(G_i)|$ identity matrix, gives the required relabelling:
        \[\mat P\transpose\mat A\big(\CDC(G)\big)  \mat P =  \mat A\big(\CDC(G_1) \cupdot \CDC(G_2)\big), \]
        so that $\CDC(G) \simeq \CDC(G_1) \cupdot \CDC(G_2)$, as required.
    \end{proof}

    \section{The Walk Matrix and Main Eigenspace}
    \label{sec:walksAndSpaces}
    
    Recall that the $k$-walk matrix $\mat W_G(k)$ is the matrix with columns $\mat A^i\mat j$ for $i = 0,\dots, k-1$, where $\mat A$ is the adjacency matrix of $G$, and $\mat j = (1,\dots,1)$.
    
    \begin{definition}[Walk Matrix]
        The \emph{walk matrix} of a graph $G$ having $p$ distinct main eigenvalues, denoted by $\mat W_G$, is the $p$-walk matrix of $G$. In other words, $\mat W_G = \mat W_G(p)$.
    \end{definition}
    
    In \plaincite{powerSuleiman}, the authors show that the first $p$ columns suffice to generate $\mat W_G(k)$ for $k \geq p$. Suppose $\mu_1, \cdots, \mu_p$ are the main eigenvalues of $G$. If one forms the \emph{main characteristic polynomial} $m_G(x) = \prod_{i=1}^p (x-\mu_i) = x^p - c_{0}x^{p-1} - \cdots - c_{p-2}x - c_{p-1}$, then
    \[(\mat A^p - c_0\mat I - c_1\mat A - \cdots - c_{p-1}\mat A^{p-1})\mat j = \prod_{i=1}^p (\mat A-\mu_i\mat I)\sum_{j=1}^p \mat P_j\mat j = \mat 0. \]
    This gives a recurrence relation for the $k$th column of $\mat W_G(k)$ in terms of the previous $p$ columns.    
    Consequently, any two comain graphs with the same walk matrix have the same $k$-walk matrix for any $k\geqslant p$. 
    
    \begin{counterexample}
        \label{cex:sameW}
        Unfortunately one may not extend a walk matrix $\mat W_G(p)=\mat W_H(p)$ common to two non-comain graphs $G$ and $H$ to the same $k$-walk matrix for arbitrary $k\geq p$. The two pairs $(G_{5\,622},G_{12\,058})$ and $(G_{5\,626},G_{12\,093})$ in \cref{fig:sameWcex} are the only two smallest  counterexample pairs (with respect to the number of vertices), obtained using Mathematica.
        They  are the only counterexamples on at most $8$ vertices having the same walk matrix, but not the same $k$-walk matrix for $k\geq p$. 
        
        The numbering of the graphs is in accordance with the list of non-isomorphic graphs on 8 vertices provided on Brendan McKay's graph data website.\cite{McKay}       
        \begin{figure}
            \begingroup
            \renewcommand{\arraystretch}{0.9}
            \centering
            \begin{minipage}{0.45\textwidth}
                \centering
                \includegraphics[height=4cm]{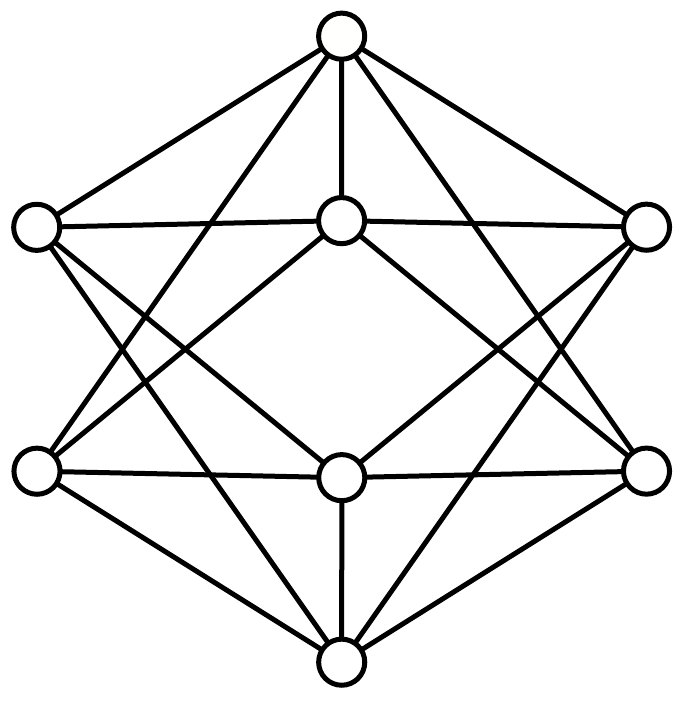}\\
                $G_{5\,622}$\\[4pt]                
                {\footnotesize Main Eigenvalues: $\frac{1-\sqrt{65}}2,\frac{1+\sqrt{65}}{2}$\\[10pt]                
                    \begin{tabular}{cc}
                        Walk Matrix & $3$-walk Matrix\\[4pt]
                        $\begin{pmatrix}
                        1&4\\
                        1&4\\
                        1&4\\
                        1&4\\
                        1&5\\
                        1&5\\
                        1&5\\
                        1&5
                        \end{pmatrix}$&
                        $\begin{pmatrix}
                        1&4&20\\
                        1&4&20\\
                        1&4&20\\
                        1&4&20\\
                        1&5&21\\
                        1&5&21\\
                        1&5&21\\
                        1&5&21
                        \end{pmatrix}$
                    \end{tabular}    
                } 
            \end{minipage} \hfil 
            \begin{minipage}{0.45\textwidth}
                \centering
                \vspace{0.5cm}
                \includegraphics[height=3cm]{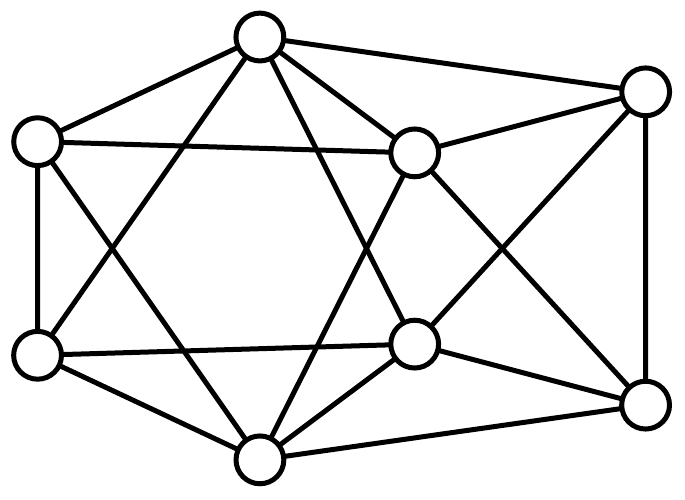}
                \vspace{0.5cm}\\
                $G_{12\,058}$\\[4pt]                
                {\footnotesize Main Eigenvalues: $\frac{3-\sqrt{37}}2,\frac{3+\sqrt{37}}{2}$\\[10pt] 
                    \begin{tabular}{cc}
                        Walk Matrix & $3$-walk Matrix\\[4pt]
                        $\begin{pmatrix}
                        1&4\\
                        1&4\\
                        1&4\\
                        1&4\\
                        1&5\\
                        1&5\\
                        1&5\\
                        1&5
                        \end{pmatrix}$&
                        $\begin{pmatrix}
                        1&4&19\\
                        1&4&19\\
                        1&4&19\\
                        1&4&19\\
                        1&5&22\\
                        1&5&22\\
                        1&5&22\\
                        1&5&22
                        \end{pmatrix}$
                    \end{tabular}               
                } 
            \end{minipage}\\[20pt]
            
            \begin{minipage}{0.45\textwidth}
                \centering
                \includegraphics[height=3cm]{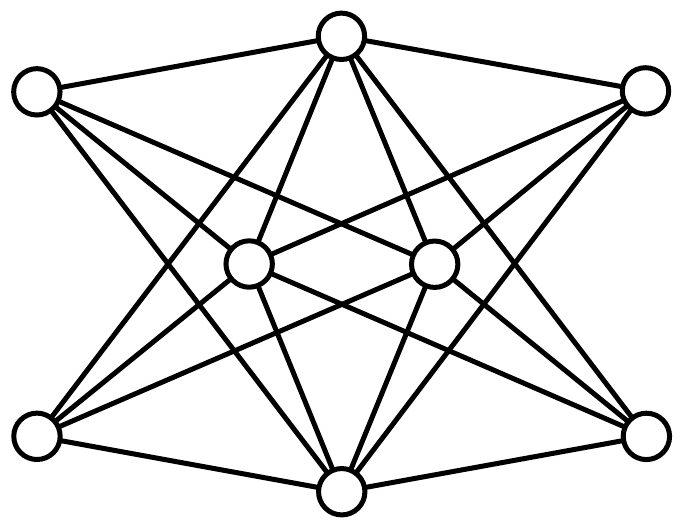}\\
                $G_{5\,626}$\\[4pt]                
                {\footnotesize Main Eigenvalues: $1+\sqrt{17}, 1-\sqrt{17}$\\[10pt]                
                    \begin{tabular}{cc}
                        Walk Matrix & $3$-walk Matrix\\[4pt]
                        $\begin{pmatrix}
                        1&4\\
                        1&4\\
                        1&4\\
                        1&4\\
                        1&6\\
                        1&6\\
                        1&6\\
                        1&6
                        \end{pmatrix}$&
                        $\begin{pmatrix}
                        1&4&24\\
                        1&4&24\\
                        1&4&24\\
                        1&4&24\\
                        1&6&28\\
                        1&6&28\\
                        1&6&28\\
                        1&6&28
                        \end{pmatrix}$
                    \end{tabular}    
                } 
            \end{minipage} \hfil 
            \begin{minipage}{0.45\textwidth}
                \centering
                \includegraphics[height=3cm]{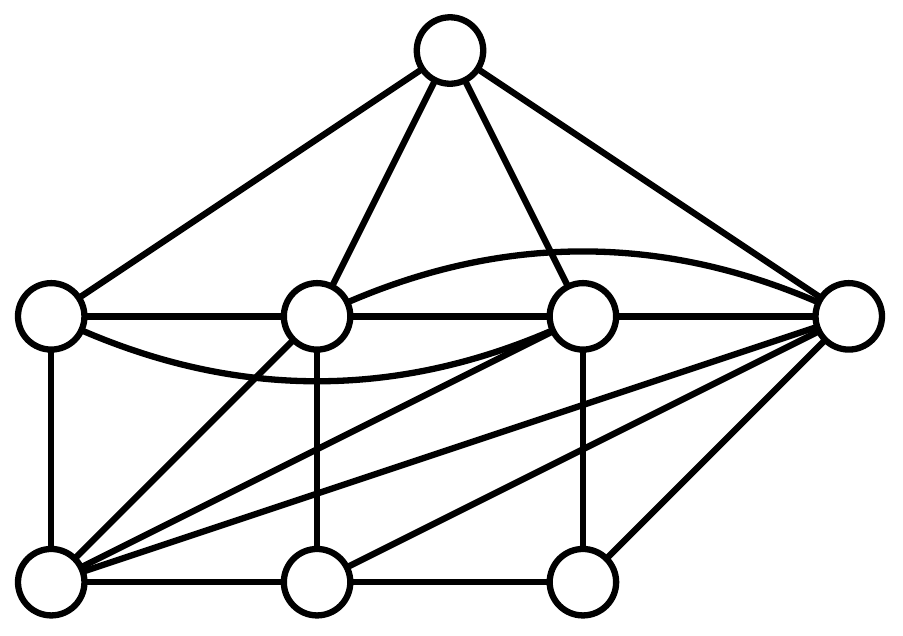}\\
                $G_{12\,093}$\\[4pt]                
                {\footnotesize Main Eigenvalues: $2+\sqrt{10}, 2-\sqrt{10}$\\[10pt] 
                    \begin{tabular}{cc}
                        Walk Matrix & $3$-walk Matrix\\[4pt]
                        $\begin{pmatrix}
                        1&4\\
                        1&4\\
                        1&4\\
                        1&4\\
                        1&6\\
                        1&6\\
                        1&6\\
                        1&6
                        \end{pmatrix}$&
                        $\begin{pmatrix}
                        1&4&22\\
                        1&4&22\\
                        1&4&22\\
                        1&4&22\\
                        1&6&30\\
                        1&6&30\\
                        1&6&30\\
                        1&6&30
                        \end{pmatrix}$
                    \end{tabular}               
                } 
            \end{minipage}
            \caption{The only two counterexamples on at most $8$ vertices, as described in \cref{cex:sameW}.}
            \label{fig:sameWcex}
            \endgroup
        \end{figure}
    \end{counterexample}
    
    \begin{theorem}
        \label{thm:TFisom=>SameW}
        Let $G$, $H$ be two graphs with $\CDC(G)\simeq \CDC(H)$, and let $k$ be a natural number. Then \[\mat W_G(k) = \mat W_H(k)\]
        for appropriate labelling of the vertices.
    \end{theorem}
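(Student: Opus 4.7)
The plan is to exploit the block form of the adjacency matrix of a canonical double covering. Writing $\mat B = \mat A(\CDC(G))$ with vertices ordered so that the $0$-copies of $\vertexSet$ come first, the off-diagonal blocks are both $\mat A = \mat A(G)$. Since the all-ones vector of length $2n$ splits as two copies of the length-$n$ all-ones vector stacked vertically, a one-line induction on $i$ establishes that
\[ \mat B^i \mat j = \begin{pmatrix} \mat A^i \mat j \\ \mat A^i \mat j \end{pmatrix}. \]
Concatenating these equalities for $i = 0, 1, \dots, k-1$ yields the key identity
\[ \mat W_{\CDC(G)}(k) = \begin{pmatrix} \mat W_G(k) \\ \mat W_G(k) \end{pmatrix}, \]
and the analogous identity holds for $H$.

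Next, recall that the walk matrix transforms simply under graph isomorphism: if $\mat P$ is a permutation matrix implementing a relabelling $G \mapsto G'$, then $\mat P \mat j = \mat j$, so $\mat W_{G'}(k) = \mat P\transpose \mat W_G(k)$. Consequently, the hypothesis $\CDC(G) \simeq \CDC(H)$ produces a permutation matrix $\mat Q$ such that $\mat Q\transpose \mat W_{\CDC(G)}(k) = \mat W_{\CDC(H)}(k)$; in particular, these two doubled walk matrices have the same multiset of rows.

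The final step is to descend from the doubled matrices back to the originals. In the left-hand matrix each distinct row $r$ appears with multiplicity $2 m_G(r)$, where $m_G(r)$ denotes its multiplicity as a row of $\mat W_G(k)$; likewise on the $H$ side with $2 m_H(r)$. Equality of the doubled multisets therefore forces $m_G(r) = m_H(r)$ for every $r$, so $\mat W_G(k)$ and $\mat W_H(k)$ already have the same multiset of rows, and choosing vertex labellings of $G$ and $H$ that sort these rows in a common order gives the required $\mat W_G(k) = \mat W_H(k)$. The only real delicacy, and what I expect to be the main obstacle, is that the isomorphism $\mat Q$ is free to mix vertices between the two halves of the two covers, so one cannot split $\mat Q$ into a pair of permutations acting on the halves separately; the multiset argument sidesteps this entirely, using only the coincidence of row multisets and not any finer structure of $\mat Q$.
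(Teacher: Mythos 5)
Your argument is correct, and it shares the paper's key identity---the columns ${\mat C_G}^{\ell}\mat j$ of the walk matrix of $\CDC(G)$ have both halves equal to ${\mat A_G}^{\ell}\mat j$, so $\mat W_{\CDC(G)}(k)$ is $\mat W_G(k)$ stacked on itself---but it finishes by a genuinely different, and more careful, route. The paper's proof relabels the vertices of $H$ to obtain $H'$ with $\mat A(\CDC(G)) = \mat A(\CDC(H'))$ and then reads off ${\mat A_G}^{\ell}\mat j = {\mat A_{H'}}^{\ell}\mat j$ column by column; taken literally, such a relabelling of $H$ itself would force $\mat A_G = \mat A_{H'}$, i.e.\ $G \simeq H$, which is stronger than the hypothesis (compare $C_6$ and $K_3 \cupdot K_3$, whose CDCs are isomorphic). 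What the hypothesis actually supplies is a permutation of the $2n$ vertices of the covers, which may mix the two fibres---precisely the delicacy you flag. Your descent handles this cleanly: each row of $\mat W_G(k)$ occurs in the doubled matrix with twice its multiplicity, the two doubled matrices differ only by a row permutation, hence the row multisets of $\mat W_G(k)$ and $\mat W_H(k)$ coincide, and sorting the rows by relabelling the vertices gives equality---exactly the ``appropriate labelling'' in the statement. What your multiset argument buys is independence from any finer structure of the isomorphism; the paper's shortcut is shorter but, to be justified, essentially needs \cref{thm:QGR=H} (from $\mat Q\,\mat A_G\,\mat R = \mat A_H$ and symmetry one gets ${\mat A_H}^{\ell}\mat j = \mat Q\,{\mat A_G}^{\ell}\mat j$ for every $\ell$), a result proved only later, so your version is self-contained at this point in the paper.
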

    
    \begin{proof}
        For a graph $\Gamma$, let $\mat A_\Gamma = \mat A(\Gamma)$ and $\mat C_\Gamma = \mat A(\CDC(\Gamma))$. Since $\CDC(G)\simeq\CDC(H)$, we can relabel the vertices of the graph $H$ to get $H'$, so that $\mat C_G = \mat C_{H'}$. Now for any $0\leq\ell\leq k$, we have that
        \[{\mat C_G}^\ell\mat j = \begin{pmatrix}
            {\mat A_G}^\ell\mat j\\
            \hline
            {\mat A_G}^\ell\mat j
        \end{pmatrix} \qquad  \text{and} \qquad
        {\mat C_{H'}}^\ell\mat j = \begin{pmatrix}
        {\mat A_{H'}}^\ell\mat j\\
        \hline
        {\mat A_{H'}}^\ell\mat j
        \end{pmatrix},  \]
        but since $\mat C_G = \mat C_{H'}$, it follows that ${\mat A_G}^\ell\mat j = {\mat A_{H'}}^\ell\mat j$ for all $0\leq\ell\leq k$, so the columns of $\mat W_G(k)$ and $\mat W_H(k)$ are equal.
    \end{proof}

    \begin{counterexample}
        \label{cex:SameW=>TFisom}
        A counterexample establishing that the converse of \cref{thm:TFisom=>SameW} is false is given in \cref{fig:counterexampleConverseCDCW}. Indeed, those graphs have
        \begingroup
        \renewcommand{\arraystretch}{0.8}
        \[\mat W_G = \begin{pmatrix}
        1& 3& 9\\
        1& 3& 10\\
        1& 3& 10\\
        1& 3& 10\\
        1& 3& 10\\
        1& 3& 9\\
        1& 4& 12
        \end{pmatrix}= \mat W_H,\]
        \endgroup
        but $\CDC(G) \nsimeq \CDC(H)$.
    \end{counterexample}
    
    \begin{figure}
        \centering
        \tikzstyle{every node}=[circle, draw=black, fill=white, inner sep=0pt, minimum width=8pt, line width = 0.3mm]
        \begin{minipage}[b]{0.5\textwidth}
            \centering
            \begin{tikzpicture}[thick, scale=0.8]
            \draw  (0,4)   node (1) {};
            \draw  (2,4)   node (2) {};
            \draw  (4,4)   node (3) {};
            \draw  (0,2)   node (4) {};
            \draw  (2,2)   node (5) {};
            \draw  (4,2)   node (6) {};
            \draw  (2,0.5) node (7) {};
            
            \draw[line width = 0.3mm]{
                (1) -- (2) -- (3) -- (6) -- (2) -- (4) -- (1)
                (1) -- (5) -- (3)
                (4) -- (7) -- (6)
                (5) -- (7)
            };
            \end{tikzpicture}
            \caption*{Graph $G$}
        \end{minipage}
        \begin{minipage}[b]{0.49\textwidth}
            \centering
            \begin{tikzpicture}[thick, scale=0.8]
            \draw  (0,4)   node (1) {};
            \draw  (2,4)   node (2) {};
            \draw  (4,4)   node (3) {};
            \draw  (0,2)   node (4) {};
            \draw  (1.2,1) node (5) {};
            \draw  (4,2)   node (6) {};
            \draw  (2.8,1) node (7) {};
            
            \draw[line width = 0.3mm]{
                (1) -- (2) -- (3) -- (6) -- (2) -- (4) -- (1)
                (1) -- (5) -- (4)
                (3) -- (7) -- (6)
                (5) -- (7)
            };
            \end{tikzpicture}
            \vfil
            \caption*{Graph $H$}
        \end{minipage}
        \caption{Graphs $G$ and $H$ give a counterexample to the converse of \cref{thm:TFisom=>SameW}, since they have the same walk matrix but different CDC's.}
        \label{fig:counterexampleConverseCDCW}
    \end{figure}
    
    Recall that the main eigenspace $\Main(G)$ of a graph $G$ is the subspace generated by the vectors obtained by the decomposition of $\mat j$ into the eigenspaces of $\mat A(G)$. An important result about $\Main(G)$ which follows from Vandermonde matrix theory is given in \plaincite{ScCardHam12} and \plaincite{RowlinsonSurvey}, and states the following.
    \begin{theorem}[\plaincite{ScCardHam12}]
        \label{thm:colsOfWareBasis}
        Let $G$ be a graph with adjacency matrix $\mat A$. Then the set \[B=\{\mat j, \mat{Aj},\dots,\mat{A}^{p-1}\mat j\}\] is a set of linearly independent vectors in $\Reals^n$. Moreover, $B$ forms a basis for $\Main(G)$.
    \end{theorem}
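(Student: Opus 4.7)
The plan is to use the spectral decomposition of $\mat A$ together with the definition of main/non-main eigenvalues to convert the claim into a non-singularity statement about a Vandermonde matrix.

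First, I would observe that if $\mu_{p+1}, \dots, \mu_s$ are the non-main eigenvalues, then by definition every vector in $\eigsp_G(\mu_i)$ (for $i>p$) is orthogonal to $\mat j$, hence $\mat P_i \mat j = \mat 0$ for $i>p$. Combining this with the spectral decomposition in \cref{eqn:specdec} gives
\[
\mat j = \sum_{i=1}^s \mat P_i\,\mat j = \sum_{i=1}^p \mat P_i\,\mat j, \qquad \mat A^k\mat j = \sum_{i=1}^p \mu_i^{\,k}\,\mat P_i\,\mat j
\]
for every $k\geq 0$. This immediately shows $B \subseteq \Main(G)$, so $\Span B \subseteq \Main(G)$.

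For the reverse inclusion and linear independence, I would use that the principal main eigenvectors $\mat P_1\mat j, \dots, \mat P_p\mat j$ are nonzero eigenvectors of $\mat A$ for pairwise distinct eigenvalues, and hence mutually orthogonal; in particular they are linearly independent and form a basis of $\Main(G)$ (by definition \cref{def:mainG}). The coordinates of the $k$th vector $\mat A^{k-1}\mat j$ of $B$ in this basis are, by the displayed identity above, the row $(\mu_1^{k-1}, \mu_2^{k-1}, \dots, \mu_p^{k-1})$. Stacking these rows for $k=1,\dots,p$ yields the $p\times p$ Vandermonde matrix
\[
V = \begin{pmatrix}
1 & 1 & \cdots & 1 \\
\mu_1 & \mu_2 & \cdots & \mu_p \\
\vdots & \vdots & & \vdots \\
\mu_1^{p-1} & \mu_2^{p-1} & \cdots & \mu_p^{p-1}
\end{pmatrix},
\]
whose determinant $\prod_{i<j}(\mu_j - \mu_i)$ is nonzero because the main eigenvalues are distinct.

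Consequently the coordinate matrix of $B$ with respect to $\{\mat P_1\mat j,\dots,\mat P_p\mat j\}$ is invertible, so $B$ is a linearly independent set of $p$ vectors inside the $p$-dimensional space $\Main(G)$, and therefore a basis of it. There is no real obstacle here; the only subtlety worth flagging is the initial step that $\mat P_i\mat j = \mat 0$ for non-main $\mu_i$, which is exactly what makes the sum collapse from $s$ to $p$ terms and reduces the problem to classical Vandermonde non-singularity.
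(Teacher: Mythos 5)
Your argument is correct, and it is exactly the route the paper points to: the theorem is quoted from the cited reference with the remark that it ``follows from Vandermonde matrix theory,'' and your proof (collapse of the spectral decomposition of $\mat j$ to the main terms because $\mat P_i\,\mat j = \mat 0$ for non-main $\mu_i$, then invertibility of the $p\times p$ Vandermonde matrix in the distinct main eigenvalues) is precisely that standard argument. No gaps worth flagging beyond the one you already note, namely that $\mat P_i\,\mat j \neq \mat 0$ for main $\mu_i$, which follows immediately from the definition of a main eigenvalue.
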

    This fact yields the following two corollaries immediately.
    \begin{corollary}[\plaincite{hagos}]
        Let $G$ be a graph. Then \[\dim(\Main(G))=p,\qquad \text{and}\qquad \rank\mat W_G(k) = \min\{k,p\}.\]
    \end{corollary}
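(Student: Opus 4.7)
The plan is to deduce both statements directly from \cref{thm:colsOfWareBasis}, using the recurrence already established via the main characteristic polynomial $m_G(x)$ earlier in this section.

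First, I would dispatch the dimension claim in a single line: by \cref{thm:colsOfWareBasis}, the set $B=\{\mat j, \mat{Aj},\dots,\mat A^{p-1}\mat j\}$ is a basis for $\Main(G)$, and $|B|=p$, so $\dim(\Main(G))=p$.

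For the rank claim, I would split into two cases. If $k\leq p$, then the columns of $\mat W_G(k)$ are exactly $\mat j, \mat{Aj},\dots,\mat A^{k-1}\mat j$, which form a subset of the linearly independent set $B$, so they remain linearly independent and the rank equals $k$. If $k>p$, then the first $p$ columns of $\mat W_G(k)$ are precisely the basis $B$ of $\Main(G)$, so they contribute rank $p$; it remains to show that every subsequent column $\mat A^{\ell}\mat j$ for $\ell\geq p$ lies in $\Main(G)=\Span B$, so that no additional rank is gained. This is exactly the content of the recurrence $(\mat A^p - c_0\mat I - c_1\mat A - \cdots - c_{p-1}\mat A^{p-1})\mat j = \mat 0$ derived earlier from $m_G$: it expresses $\mat A^p\mat j$ as a linear combination of $\mat j, \mat{Aj},\dots,\mat A^{p-1}\mat j$, and multiplying by successive powers of $\mat A$ (and substituting inductively) expresses every $\mat A^\ell\mat j$ with $\ell\geq p$ in terms of $B$. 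Hence $\rank\mat W_G(k)=p$ in this case.

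There is no real obstacle here; both conclusions are immediate consequences of \cref{thm:colsOfWareBasis} combined with the $m_G$-recurrence. The only mild subtlety is making explicit that the inductive step for $\ell>p$ uses the recurrence repeatedly rather than just the single identity at $\ell=p$, but this is routine.
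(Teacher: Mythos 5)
Your proof is correct and follows essentially the route the paper intends: the paper states the corollary as an immediate consequence of \cref{thm:colsOfWareBasis}, and your rank argument for $k>p$ uses exactly the recurrence $(\mat A^p - c_0\mat I - \cdots - c_{p-1}\mat A^{p-1})\mat j = \mat 0$ that the paper derives just before this corollary. (Equivalently, one could note $\mat A^{\ell}\mat j=\sum_i \mu_i^{\ell}\mat P_i\mat j\in\Main(G)$ for every $\ell$, but your inductive use of the recurrence is fine.)
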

    
    \begin{corollary}[\plaincite{ScCardHam12}]
        \label{corr:sameW=>sameMainSp}
        Let $G$ and $H$ be two graphs with the same walk matrix. Then \[\Main(G) = \Main(H).\]
    \end{corollary}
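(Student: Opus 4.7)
The plan is to read this off directly from Theorem \ref{thm:colsOfWareBasis} with essentially no additional work. Since the main eigenspace is characterised as the column space of the walk matrix, two graphs sharing the same walk matrix must share the same main eigenspace.

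First I would unpack what the hypothesis $\mat W_G = \mat W_H$ actually entails. By the definition given just before Counterexample \ref{cex:sameW}, $\mat W_G$ is the $n\times p_G$ matrix whose columns are $\mat j, \mat A(G)\mat j, \dots, \mat A(G)^{p_G-1}\mat j$, where $p_G$ is the number of distinct main eigenvalues of $G$; similarly for $H$ with $p_H$ columns. For the matrix equality $\mat W_G = \mat W_H$ to be meaningful, the dimensions must match, so in particular $p_G = p_H \eqdef p$, and the first column forces $n_G = n_H = n$.

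Next I would apply Theorem \ref{thm:colsOfWareBasis} to each of $G$ and $H$: the columns of $\mat W_G$ form a basis of $\Main(G)$, and the columns of $\mat W_H$ form a basis of $\Main(H)$. Hence
\[\Main(G) = \colsp(\mat W_G) \qquad\text{and}\qquad \Main(H) = \colsp(\mat W_H).\]
Since the two walk matrices are equal as matrices, their column spaces coincide, and therefore $\Main(G) = \Main(H)$.

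There is no real obstacle: once Theorem \ref{thm:colsOfWareBasis} is in hand, the corollary is a one-line consequence. The only minor subtlety worth flagging is the implicit observation that $p_G = p_H$, which is forced by the equality of matrix dimensions and ensures that both walk matrices describe bases of the same cardinality, ruling out any degenerate interpretation of ``same walk matrix''.
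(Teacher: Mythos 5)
Your proof is correct and is exactly the paper's route: the paper derives this corollary ``immediately'' from Theorem \ref{thm:colsOfWareBasis}, since the columns of the walk matrix form a basis of the main eigenspace, so equal walk matrices give equal column spaces and hence equal main eigenspaces. Your extra remark that equality of the matrices forces $p_G=p_H$ is a reasonable clarification but adds nothing essential beyond the paper's one-line argument.
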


    \section{Equivalence of Same CDC and TF-Isomorphism}
    \label{sec:tfisom}
    If two graphs $G$, $H$ have isomorphic canonical double coverings, that is, $\CDC(G) \simeq \CDC(H)$, and $G$ is connected, we do not necessarily have that $H$ is connected. Indeed, $\CDC(C_6) \simeq \CDC(K_3 \cupdot K_3)$ as we saw in \cref{fig:cdcEg}.
    
    However, we do have the following.

    \begin{lemma}
         \label{lemma:NoIsolatedVertices}
         Let $G$ and $H$ be two graphs with $\CDC(G) \simeq \CDC(H)$.  Then
         $G$ has no isolated vertices if and only if $H$ has no isolated vertices.
    \end{lemma}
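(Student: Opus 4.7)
The plan is to establish the following sharper claim, which immediately implies the lemma: the number of isolated vertices in $\CDC(G)$ equals exactly twice the number of isolated vertices in $G$. Once this is in hand, the hypothesis $\CDC(G)\simeq\CDC(H)$ forces $G$ and $H$ to have the same number of isolated vertices, and in particular $G$ has none iff $H$ has none.

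The key step is a local observation about the definition of the CDC. Take a vertex $(v,\epsilon)\in\vertexSet(\CDC(G))$ with $\epsilon\in\{0,1\}$. By the definition of $\edgeSet'$, every edge incident to $(v,\epsilon)$ is of the form $\{(v,\epsilon),(u,1-\epsilon)\}$ for some $u$ with $\{u,v\}\in\edgeSet(G)$. Hence $(v,\epsilon)$ is isolated in $\CDC(G)$ if and only if $v$ has no neighbour in $G$, i.e.\ if and only if $v$ is isolated in $G$. This gives a two-to-one correspondence between isolated vertices of $\CDC(G)$ and isolated vertices of $G$, proving the counting claim.

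With this in place, the rest is immediate: isomorphic graphs have the same number of isolated vertices, so
\[2\cdot|\{\text{isolated vertices of }G\}| = 2\cdot|\{\text{isolated vertices of }H\}|,\]
and the lemma follows by dividing by two and specialising to the case where either side is zero. I do not expect any real obstacle here; the only thing worth double-checking is the definition-level claim that the two copies of an isolated vertex of $G$ become isolated in $\CDC(G)$ and that no other vertices of $\CDC(G)$ can be isolated, which is a direct unpacking of the edge set $\edgeSet'$ given at the start of \cref{sec:cdcs}.
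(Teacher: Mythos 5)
Your proof is correct, but it takes a different route from the paper's. The paper argues via its distributivity lemma (\cref{lemma:distributivityCDC}): an isolated vertex in $G$ gives $G = G' \cupdot K_1$, hence $\CDC(G) \simeq \CDC(G')\cupdot \widebar K_2$, and then a block-matrix argument shows that a zero column in $\mat A(\CDC(H)) = \left(\begin{smallmatrix}\mat O & \mat A(H)\\ \mat A(H) & \mat O\end{smallmatrix}\right)$ can only come from a zero column of $\mat A(H)$ itself, so $H$ has an isolated vertex; symmetry gives the converse. You instead make the purely local observation that the degree of $(v,\epsilon)$ in $\CDC(G)$ equals the degree of $v$ in $G$ (every edge at $(v,\epsilon)$ has the form $\{(v,\epsilon),(u,1-\epsilon)\}$ with $\{u,v\}\in\edgeSet(G)$), so isolated vertices of $\CDC(G)$ are exactly the two copies of isolated vertices of $G$, and the count of isolated vertices --- an isomorphism invariant --- doubles under $\CDC$. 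This buys you a slightly sharper quantitative statement (equal numbers of isolated vertices in $G$ and $H$, not merely ``none iff none''), a manifestly symmetric argument with no case-splitting, and independence from \cref{lemma:distributivityCDC}; the paper's version, in exchange, stays within the adjacency-matrix formalism it uses throughout \cref{sec:cdcs,sec:tfisom}. Both are complete proofs; the only point worth stating explicitly in yours is the (trivial) fact that the number of isolated vertices is preserved by isomorphism, which you do invoke.
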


    \begin{proof}
        Indeed, if $G$ has an isolated vertex, then $G = G' \cupdot K_1$, so \[\CDC(G) = \CDC(G' \cupdot K_1) = \CDC(G') \cupdot \CDC(K_1) = \CDC(G') \cupdot\widebar K_2\]
        by \cref{lemma:distributivityCDC}, and therefore $\CDC(H) = \CDC(G') \cupdot \widebar K_2$. Thus the matrix
        \[\mat A(\CDC(H)) = \begin{blockmatrix2}
        \mat O&\mat A(H)\\
        \hline
        \mat A(H)&\mat O
        \end{blockmatrix2}, \]
        has two whole columns of zeros, corresponding to the isolated vertices which make up $\widebar K_2$. But a column of zeros in the matrix above arises only when a whole column of zeros is present in one of the non-zero blocks $\mat A(H)$, and since both non-zero blocks are equal to $\mat A(H)$, then these two columns must be distributed equally among both $\mat A(H)$'s (otherwise they would be different). In other words, $\mat A(H)$ must have a column of zeros, and consequently $H$ has an isolated vertex. This argument is symmetric (simply interchange $G$ and $H$) so we also have the converse.
    \end{proof}

    \begin{theorem}
        \label{thm:QGR=H}
        Suppose $G$ and $H$ are two graphs with adjacency matrices ${\mat A}_G$ and ${\mat A}_H$. Then $\CDC(G) \simeq \CDC(H)$ if and only if there exist two permutation matrices $\mat Q$ and $\mat R$ such that
        \[\mat Q\, {\mat A}_G\, \mat R = {\mat A}_H. \]
    \end{theorem}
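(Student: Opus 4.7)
The plan is to prove the two implications of the biconditional separately, handling the converse first because it is essentially a computation.

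For the converse ($\Leftarrow$), suppose $\mat Q\,\mat A_G\,\mat R = \mat A_H$ for permutation matrices $\mat Q, \mat R$. Take the block-diagonal permutation matrix
\[\mat P = \begin{blockmatrix2}\mat Q^T & \mat O\\ \mat O & \mat R\end{blockmatrix2}.\]
A direct block computation using $\mat C_G = \left(\begin{smallmatrix}\mat O & \mat A_G\\ \mat A_G & \mat O\end{smallmatrix}\right)$ from the proof of \cref{thm:TFisom=>SameW} gives
\[\mat P^T\mat C_G\mat P = \begin{blockmatrix2}\mat O & \mat Q\,\mat A_G\,\mat R\\ \mat R^T\mat A_G\,\mat Q^T & \mat O\end{blockmatrix2}.\]
The upper-right block equals $\mat A_H$ by hypothesis, and its transpose $\mat R^T\mat A_G\,\mat Q^T = (\mat Q\,\mat A_G\,\mat R)^T = \mat A_H^T = \mat A_H$ by symmetry, so $\mat P^T\mat C_G\mat P = \mat C_H$.

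For the forward direction ($\Rightarrow$), let $\pi\colon\CDC(G)\to\CDC(H)$ be the isomorphism. The plan is to arrange for $\pi$ to respect the canonical bipartitions, meaning $\pi(\vertexSet(G)\times\{0\}) = \vertexSet(H)\times\{0\}$ and similarly for side $1$. Once this is arranged, $\pi$ restricts on each side to a bijection $\vertexSet(G)\to\vertexSet(H)$; calling these restrictions $\sigma$ and $\tau$, the edge-preservation of $\pi$ applied to edges $\{(u,0),(v,1)\}$ of $\CDC(G)$ becomes
\[\{u,v\}\in\edgeSet(G) \iff \{\sigma(u),\tau(v)\}\in\edgeSet(H),\]
which in matrix form is $\mat A_H = \mat P_\sigma\,\mat A_G\,\mat P_\tau^T$; we may then take $\mat Q = \mat P_\sigma$ and $\mat R = \mat P_\tau^T$.

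The hard step is the reduction to a side-preserving $\pi$. The available tools are automorphisms of $\CDC(H)$: the global swap $\left(\begin{smallmatrix}\mat O & \mat I\\ \mat I & \mat O\end{smallmatrix}\right)$, and the partial swaps $\Sigma_T$ (for $T$ a union of connected components of $H$) that flip $(v,0)\leftrightarrow(v,1)$ exactly for $v\in T$; a direct edge check combined with \cref{lemma:distributivityCDC} shows that these are automorphisms. Both bipartition sides of $\CDC(G)$ have cardinality $n$, so the image $\pi(\vertexSet(G)\times\{0\})$ is a balanced bipartition side of $\CDC(H)$. A componentwise analysis, using that each component of $\CDC(H)$ is either $\CDC(H_i)$ for a non-bipartite component $H_i$ of $H$ or one of the two isomorphic copies of $H_i$ when $H_i$ is bipartite, shows that a suitable combination of the above automorphisms maps any balanced image bipartition back to the canonical one. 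The subtlest case, which will take the most work, arises when $H$ has bipartite components with equal-sized parts, where further non-canonical balanced bipartitions of the corresponding CDC-piece can appear; here the required automorphism must be constructed from $H_i$'s own symmetries, ultimately relying on the symmetry of $\mat A_G$ to rule out those image bipartitions whose associated biadjacency would fail to be symmetric.
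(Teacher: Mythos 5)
Your converse direction is fine: the block computation with $\mat P=\left(\begin{smallmatrix}\mat Q\transpose & \mat O\\ \mat O & \mat R\end{smallmatrix}\right)$ is correct (the paper uses an anti-diagonal block permutation instead, but this is cosmetic), and your observation that a \emph{side-preserving} isomorphism $\CDC(G)\to\CDC(H)$ is exactly the same datum as a pair $(\mat Q,\mat R)$ with $\mat Q\mat A_G\mat R=\mat A_H$ is also correct. The problem is the forward direction, where the whole content of the theorem is the reduction to a side-preserving $\pi$, and in your proposal that reduction is a plan rather than a proof. Concretely: the image $S=\pi(\vertexSet(G)\times\{0\})$ selects one part of each component of $\CDC(H)$, and for a bipartite component $H_i$ of $H$ with parts $X,Y$ the two components of $\CDC(H_i)$ have parts $X\times\{0\},\,Y\times\{1\}$ and $Y\times\{0\},\,X\times\{1\}$. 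A ``mixed'' selection such as $(Y\times\{1\})\cup(Y\times\{0\})$ is not repaired by your global or partial swaps: any repairing automorphism must restrict either to an automorphism of $H_i$ exchanging $X$ and $Y$, or to an isomorphism onto another component matching parts crosswise, and such maps need not exist. Your fallback --- that the symmetry of $\mat A_G$ ``rules out'' the unrepairable images --- is precisely the nontrivial claim, and it is asserted, not argued; making it work requires, for instance, showing that when such an image does occur the relevant components of $G$ are forced to be non-bipartite, so that $H_i$ is itself a CDC and inherits the swap $(v,j)\mapsto(v,1-j)$, or that $G\simeq H_i$ forces a part-swapping automorphism after all. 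Moreover the hard case is wider than you state: the balance constraint couples components, so non-canonical balanced images also arise when $H$ has bipartite components with part sizes $(a,b)$ and $(b,a)$, $a\neq b$, where a part-swapping automorphism of either component is outright impossible; isolated vertices need separate (if easy) bookkeeping as well. As written, the hard half of the theorem is open.

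For comparison, the paper avoids all of this case analysis with a short matrix argument: write the permutation conjugating $\mat A(\CDC(G))$ to $\mat A(\CDC(H))$ in block form $\mat P=\left(\begin{smallmatrix}\mat P_{11} & \mat P_{12}\\ \mat P_{21} & \mat P_{22}\end{smallmatrix}\right)$; comparing blocks and using non-negativity of entries gives $\mat P_{21}\transpose\mat A_G\mat P_{11}=\mat P_{12}\transpose\mat A_G\mat P_{22}=\mat O$, hence $(\mat P_{11}+\mat P_{21})\transpose\mat A_G(\mat P_{22}+\mat P_{12})=\mat A_H$, and $\mat Q=(\mat P_{11}+\mat P_{21})\transpose$, $\mat R=\mat P_{22}+\mat P_{12}$ are permutation matrices because a zero row would force a zero row in $\mat A_H$, which is excluded after pairing off isolated vertices via \cref{lemma:NoIsolatedVertices}. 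If you want to keep your combinatorial route (which is close in spirit to the argument of Lauri et al.\ cited in the paper), the missing lemma you must actually prove is: every balanced independent bipartition of $\CDC(H)$ that occurs as the image of the canonical side under an isomorphism from some CDC is carried to the canonical side by an automorphism of $\CDC(H)$ --- and proving that is essentially as much work as the theorem itself.
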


    \begin{proof}
        Suppose, without loss of generality, that the graphs $G$ and $H$ have no isolated vertices (if they do, then by \cref{lemma:NoIsolatedVertices}, we could simply pair them off until we are left with two graphs having no isolated vertices). If $\CDC(G) \simeq \CDC(H)$, then there exists a permutation matrix 
        $\mat P = \left(\begin{smallmatrix}
            \multicolumn{1}{c|}{$\mat P_{11}$} & \mat P_{12}\\[2pt]
            \hline
            \multicolumn{1}{c|}{}&\\
            \multicolumn{1}{c|}{$\mat P_{21}$} & \mat P_{22}
        \end{smallmatrix} \right)$
        such that
        \begin{gather*}
            \mat P\transpose\begin{blockmatrix2}
            \mat O & \mat A_G\\
            \hline
            \mat A_G & \mat O
            \end{blockmatrix2}\mat P = \begin{blockmatrix2}
            \mat O & \mat A_H\\
            \hline
            \mat A_H & \mat O
            \end{blockmatrix2}.  
        \end{gather*}
        Hence by comparing entries, we get that
        \begin{gather}
            \label{eqn:perms1}
            \mat P_{21}\transpose\mat A_G{\mat P_{12}} + \mat P_{11}\transpose\mat A_G{\mat P_{22}} = \mat A_H\\
            \label{eqn:perms2}
            \mat P_{21}\transpose\mat A_G{\mat P_{11}} = \mat P_{12}\transpose\mat A_G{\mat P_{22}} = \mat O,
        \end{gather}
        where \cref{eqn:perms2} follows since all the matrices have non-negative entries.

        Now observe that
        \[(\mat P_{11}+\mat P_{21})\transpose \mat A_G (\mat P_{22}+\mat P_{12}) = \mat A_{H} \]
        by \cref{eqn:perms1,eqn:perms2}. Now suppose $\mat Q = (\mat P_{11}+\mat P_{21})\transpose$ or $\mat R = \mat P_{22}+\mat P_{12}$ is not a permutation matrix. Being the sum of two submatrices of $\mat P$, this can only happen if a row (and column) are zero. But then $\mat A_H$ will have a row of zeros, corresponding to an isolated vertex in $H$, a contradiction.

        Conversely, if $\mat Q \mat A_G \mat R = \mat A_H$, then clearly $\mat P \defeq \begin{blockmatrix2}
            \mat O & \mat Q\phantom\transpose\\
            \hline
            \mat R\transpose & \mat O
        \end{blockmatrix2}$ defines a permutation matrix, and it is easy to verify that
        \[\mat P\transpose \begin{blockmatrix2}
            \mat O & \mat A_G\\
            \hline
            \mat A_G & \mat O
        \end{blockmatrix2} \mat P = \begin{blockmatrix2}
            \mat O & \mat A_H\\
            \hline
            \mat A_H & \mat O
        \end{blockmatrix2},\]
        as required.
    \end{proof}

    This weakened notion of graph isomorphism, where $\mat Q \mat A_G \mat R = \mat A_H$ and the permutation matrices $\mat Q$ and $\mat R$ are not necessarily inverses, was first studied by Lauri et al.\ in \plaincite{Lauri}. They give a different proof of \cref{thm:QGR=H} which uses a combinatorial argument. Such graphs are said to be \emph{two-fold isomorphic} or \emph{TF-isomorphic}, and we write \[G \tfi H.\]
    The pair of permutations $(\mat Q, \mat R)$ is called the \emph{TF-isomorphism}\index{TF-isomorphism}.
    
    In \plaincite{Lauri}, the authors introduced TF-isomorphisms. They discuss a pair of TF-isomorphic graphs on 7 vertices found by B.\ Zelinka (\cref{fig:zelinka}). This is the third out of the 32 pairs we found using Mathematica.
    
    \begin{figure}
        \centering
        \tikzstyle{every node}=[circle, draw=black, fill=white, inner sep=0pt, minimum width=8pt, line width = 0.3mm]
        \begin{minipage}[b]{0.5\textwidth}
            \centering
            \begin{tikzpicture}[thick, scale=1.2]
                \draw  (-1.8,0.5)   node (1) {};
                \draw  (-1.8,-0.5)  node (2) {};
                \draw  (-1,0)       node (3) {};
                \draw  (0,0)        node (4) {};
                \draw  (1,0)        node (5) {};
                \draw  (1.8,0.5)    node (6) {};
                \draw  (1.8,-0.5)   node (7) {};
                
                \draw[line width = 0.3mm]{
                    (1) -- (2) -- (3) -- (1)
                    (3) -- (4) -- (5) -- (6) -- (7) -- (5)
                };
            \end{tikzpicture}
            \vspace{0.5cm}
            \caption*{Graph $G$}
        \end{minipage}
        \begin{minipage}[b]{0.49\textwidth}
            \centering
            \begin{tikzpicture}[thick, scale=0.8]
                \draw  (-1.8,1)  node (1) {};
                \draw  (0,1.4)   node (2) {};
                \draw  (1.8,1)   node (3) {};
                \draw  (1.8,-1)  node (4) {};
                \draw  (0,-1.4)  node (5) {};
                \draw  (-1.8,-1) node (6) {};
                \draw  (0,0)     node (7) {};
                
                \draw[line width = 0.3mm]{
                    (1) -- (2) -- (3) -- (4) -- (5) -- (6) -- (1)
                    (2) -- (7)
                    (7) -- (5)
                };
            \end{tikzpicture}
            \vfil
            \caption*{Graph $H$}
        \end{minipage}
        \caption{The Zelinka example discussed in \plaincite{Lauri}.}
        \label{fig:zelinka}
    \end{figure}

%
%
%

    \section{Establishing the Hierarchy}
    \label{sec:hierarchy}
    In this final section, we compare the strength of relationships and similarities between graphs using the results presented above. This establishes a hierarchy depending on their main eigenvalues, main eigenspaces, main eigenvalues, walk matrices, and CDCs. 
    
    \begin{figure}[h]
        \centering
        \footnotesize
        \begin{tikzpicture}    
        \tikzset{every node/.style={draw=black, align=center, text width=2.1cm}} 
        
        \node                                 (smes) {Same Main Eigenspace};
        \node [below right = of smes]         (rwm)  {Related Walk Matrices};
        \node [below left  = of smes]         (smev) {Same Main Eigenvectors};
        \node [below       = of smev]         (swm)  {Same Walk Matrix};
        \node [below       = of swm]          (scdc) {Isomorphic CDCs};
        \node [left        = 1.2cm of scdc]   (cmn)  {Same Main Eigenvalues};
        \node [below       = of scdc]         (tfis) {Two-fold isomorphic};
        
        \tikzset{every node/.style={}, 
            every path/.style={>=implies, double equal sign distance}}
        
        \node [circle,draw, below left = 0cm and 2.125cm of smev]     (join) {$\land$};
        
        \draw[<->] (smes.330) -- node[above right]{\ref{thm:relatedWalkMatrices}} (rwm.160);
        \draw[<- ] (smes.210) -- node[below right]{\eqref{def:mainG}} (smev.25);       
        \draw[<- ] (swm.230)  -- node[left=2pt]   {\ref{thm:TFisom=>SameW}} (scdc.130);
        \draw[<->] (scdc)     -- node[left=2pt]   {\ref{thm:QGR=H}}         (tfis);
        \draw[<-,dotted] (cmn.10)   -- node[above=3pt]  {\ref{question:CDC=>comain}} (scdc.170);
        \draw (swm.0)    edge[->, bend right]  node[below right]    {\ref{corr:sameW=>sameMainSp}} (smes);
        
        \draw[dashed] (cmn) -- (join);
        \draw[dashed] (smev) -- (join);
        \draw[->] (join) -- node[below left] {\ref{thm:sameEigsandVecs}} (swm);
        
        \draw[-> ] (smes.190) -- node[rotate=-20, label={140:\ref{example:3137}}]{\scalebox{1}[2]{\bf /}}  (smev.50);
        \draw[<->] (smev)     -- node[rotate=-20, label={20:\ref{cex:sameW!=sameEigVecs} \& \ref{prop:wG!=wH}}] {\scalebox{1}[2]{\bf /}}  (swm);
        \draw[-> ] (swm.320)  -- node[rotate=-20, label={20:\ref{cex:SameW=>TFisom}}] {\scalebox{1}[2]{\bf /}}   (scdc.40);
        \draw[<->] (cmn.30)   -- node[label={[rotate=45,label distance=-4pt]90:\ref{cex:sameW} \& \ref{cex:CMN!=>SameW}}] {\scalebox{1}[2]{\bf /}} (swm.180);
        \draw[-> ] (cmn.350)  -- node[rotate=-20, label={290:\ref{cex:CMN!=>SameW}}] {\scalebox{1}[1]{\bf /}}  (scdc.190);
        \end{tikzpicture}
        \caption{The hierarchy we present through our results. The symbol $\Rightarrow$ means ``implies'', and $\nRightarrow$ means ``does not imply''. The combination $\Leftrightarrow$ is short for $\Rightarrow$ and $\Leftarrow$, i.e. ``implies and is implied by'', and similarly $\nLeftrightarrow$ is short for $\nRightarrow$ and $\nLeftarrow$, i.e. ``does not imply and is not implied by''. The dashed lines which merge at the $\land$ node denote the conjunction of those two results. The dotted lines denote \cref{question:CDC=>comain}.}
        \label{fig:charaterisation}
    \end{figure}
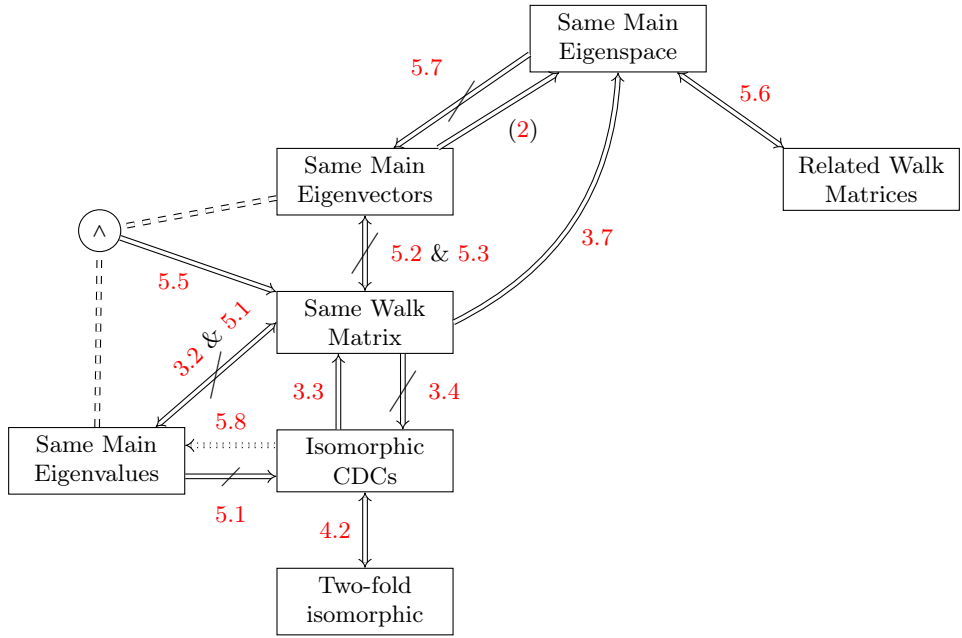

    The hierarchy of results is presented in \cref{fig:charaterisation}.  That graphs with $\CDC(G)\simeq\CDC(H)$ have the same walk matrix is established in \cref{thm:TFisom=>SameW}. That the converse of \cref{thm:TFisom=>SameW} is false, i.e., that having the same walk matrix does not imply we have isomorphic CDC's, is shown in \cref{cex:SameW=>TFisom}.   That being TF-isomorphic and having isomorphic CDC's are equivalent is established by \cref{thm:QGR=H}. 
    Now we start to fill in some of the missing links.
    
    \begin{counterexample}
        \label{cex:CMN!=>SameW}
        In \cref{cex:sameW}, two pairs of graphs are given which have the same walk matrix but different main eigenvalues.  Here we prove that the converse is also false. The graphs $G$ and $H$ of \cref{fig:counterexampleCMN=>SameW} suffice to prove that having the same main eigenvalues does not imply that the graphs have the same walk matrix.
        Indeed, they both have main polynomial $x(x^3-2x^2-4x+7)$, but their walk matrices are
        \begingroup
        \renewcommand{\arraystretch}{0.7}
        \[\mat W_{G} = \begin{pmatrix}
        1 & 2 & 6 & 12 \\
        1 & 2 & 4 & 10 \\
        1 & 2 & 4 & 10 \\
        1 & 2 & 6 & 12 \\
        1 & 4 & 8 & 24 \\
        1 & 2 & 6 & 14 \\
        1 & 2 & 6 & 14 
        \end{pmatrix}, \qquad \mat W_{H} = \begin{pmatrix}
        1 & 2 & 6 & 12 \\
        1 & 3 & 7 & 19 \\
        1 & 2 & 6 & 14 \\
        1 & 3 & 7 & 19 \\
        1 & 2 & 6 & 12 \\
        1 & 3 & 5 & 15 \\
        1 & 1 & 3 & 5
        \end{pmatrix}.\]
        \endgroup
        Moreover, their $\CDC$'s are not isomorphic.
        \begin{figure}
            \centering
            \tikzstyle{every node}=[circle, draw=black, fill=white, inner sep=0pt, minimum width=8pt, line width = 0.3mm]
            \begin{minipage}[b]{0.5\textwidth}
                \centering
                \begin{tikzpicture}[thick, scale=0.8]
                    \draw  (72:1.5)    node (1) {};
                    \draw  (144:1.5)   node (2) {};
                    \draw  (216:1.5)   node (3) {};
                    \draw  (288:1.5)   node (4) {};
                    \draw  (0:1.5)     node (5) {};
                    \draw  (3.2,1)     node (6) {};
                    \draw  (3.2,-1)    node (7) {};
                    
                    \draw[line width = 0.3mm]{
                        (1) -- (2) -- (3) -- (4) -- (5) -- (1)
                        (5) -- (6) -- (7) -- (5)
                    };
                \end{tikzpicture}
                \caption*{Graph $G$}
            \end{minipage}
            \begin{minipage}[b]{0.49\textwidth}
                \centering
                \begin{tikzpicture}[thick, scale=0.8]
                    \draw  (60:1.5)   node (1) {};
                    \draw  (120:1.5)  node (2) {};
                    \draw  (180:1.5)  node (3) {};
                    \draw  (240:1.5)  node (4) {};
                    \draw  (300:1.5)  node (5) {};
                    \draw  (0:1.5)    node (6) {};
                    \draw  (0:3.5)    node (7) {};
                    
                    \draw[line width = 0.3mm]{
                        (1) -- (2) -- (3) -- (4) -- (5) -- (6) -- (1)
                        (6) -- (7)
                        (4) -- (2)
                    };
                \end{tikzpicture}
                \vfil
                \caption*{Graph $H$}
            \end{minipage}
            \caption{Graphs $G$ and $H$ have the same main eigenvalues, but have different walk matrices and different $\CDC$'s.}
            \label{fig:counterexampleCMN=>SameW}
        \end{figure}
    \end{counterexample}

    In \cref{corr:sameW=>sameMainSp}, we show that having the same walk matrix implies that the main eigenspace is the same. But does this mean the principal main eigenvectors $\{\mat P_1\mat j,\dots,\mat P_p\mat j\}$ which generate the space are the same?
    
    \begin{counterexample}
        \label{cex:sameW!=sameEigVecs}
        Here we show that this is not the case, i.e., graphs having the same walk matrix do not necessarily have the same principal main eigenvectors. Indeed, the two pairs of graphs in \cref{cex:sameW} (\cref{fig:sameWcex}) have the same walk matrix but different principal main eigenvectors.
        
        The graphs of the first pair $(G_{5\,622},G_{12\,058})$ each have the following two corresponding principal main eigenvectors:
        \begin{align*}
            G_{5\,622}: \quad &\tfrac18(-1\pm\sqrt{65},-1\pm\sqrt{65},-1\pm\sqrt{65},-1\pm\sqrt{65},8,8,8,8)\\
            G_{12\,058}:\quad & \tfrac16(-1\pm\sqrt{37},-1\pm\sqrt{37},-1\pm\sqrt{37},-1\pm\sqrt{37},6,6,6,6)
        \end{align*}
        Clearly the vectors corresponding to $G_{5\,622}$ are not scalar multiples of those corresponding to $G_{12\,058}$, but both separately span the same main eigenspace.
    \end{counterexample}

    Thus the leap from eigenvectors to eigenspace is crucial. In fact, it turns out that if two graphs have the same main eigenvectors but different main eigenvalues, they can never have the same walk matrix:
    
    \begin{prop}
        \label{prop:wG!=wH}
        Let $G$ and $H$ be two graphs with the same main eigenvectors but different main eigenvalues. Then $\mat W_G(k) \neq \mat W_H(k)$ for all $k\geq 2$.
    \end{prop}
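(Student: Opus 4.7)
The plan is to show that already the second column of the walk matrix, $\mat A\mat j$, must differ between $G$ and $H$; since $\mat W_\Gamma(2)$ sits inside $\mat W_\Gamma(k)$ as its first two columns for every $k \geq 2$, this immediately yields the claim for all such $k$.

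First, I would align the data on both sides. Since ``same main eigenvectors'' means the sets of principal main eigenvectors coincide, both main eigenspaces are spanned by the same family, so by the corollary of \cref{thm:colsOfWareBasis} the graphs share the same count $p$ of distinct main eigenvalues. Let $\mat v_1, \dots, \mat v_p$ denote these common principal main eigenvectors. Each $\mat v_i$ is an eigenvector of both $\mat A_G$ and $\mat A_H$, so there exist uniquely determined scalars $\mu_i, \nu_i$ with
\[\mat A_G\mat v_i = \mu_i\mat v_i \qquad \text{and} \qquad \mat A_H\mat v_i = \nu_i\mat v_i,\]
where $\{\mu_1,\dots,\mu_p\}$ and $\{\nu_1,\dots,\nu_p\}$ are respectively the sets of main eigenvalues of $G$ and $H$, which by hypothesis differ (as sets). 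After relabelling, we may assume $\mu_{i_0}\neq \nu_{i_0}$ for some index $i_0$.

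Next, I would exploit the identity $\mat j = \sum_{i=1}^p \mat v_i$, which holds for both graphs: indeed, for any graph $\Gamma$ the spectral decomposition gives $\mat j = \sum_{i=1}^s \mat P_i\mat j$, and $\mat P_i\mat j = \mat 0$ precisely when $\mu_i$ is non-main, so only the principal main eigenvectors remain in the sum. Applying $\mat A_G$ and $\mat A_H$ respectively yields
\[\mat A_G\mat j = \sum_{i=1}^p \mu_i \mat v_i, \qquad \mat A_H\mat j = \sum_{i=1}^p \nu_i \mat v_i.\]

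Finally, I would conclude using linear independence. Since $\{\mat v_1,\dots,\mat v_p\}$ is a basis of $\Main(G) = \Main(H)$, the difference $\mat A_G\mat j - \mat A_H\mat j = \sum_{i=1}^p (\mu_i - \nu_i)\mat v_i$ is nonzero whenever some $\mu_i \neq \nu_i$, which is the case by our setup. Therefore the second columns of $\mat W_G(k)$ and $\mat W_H(k)$ disagree, so $\mat W_G(k) \neq \mat W_H(k)$ for every $k\geq 2$. The argument is essentially self-running once the decomposition of $\mat j$ is in place; the only subtlety is keeping straight that each common eigenvector $\mat v_i$ really does carry an unambiguous eigenvalue with respect to each of $\mat A_G$ and $\mat A_H$, which is immediate.
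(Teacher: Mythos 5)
Your proof is correct and follows essentially the same route as the paper: both compare the second columns $\mat A_G\mat j$ and $\mat A_H\mat j$ via the decomposition $\mat j = \mat v_1 + \cdots + \mat v_p$ into the common principal main eigenvectors and conclude by linear independence. Your write-up merely adds some explicit justification (the spectral-decomposition origin of $\mat j = \sum_i \mat v_i$ and the existence of an index with $\mu_{i_0}\neq\nu_{i_0}$) that the paper leaves implicit.
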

    
    \begin{proof}
        Let $G$ and $H$ have the same principal main eigenvectors $\{\mat x_1,\dots,\mat x_p\}$, but different eigenvalues, $\mu_1^G,\dots,\mu_p^G$,and  $\mu_1^H,\dots,\mu_p^H$ respectively. Since they are projections onto distinct eigenspaces, they are orthogonal, linearly independent, and $\mat x_1+\cdots+\mat x_p=\mat j$. Hence the second column of $\mat W_G(k)$ is
        \[\mat A_G\mat j = \sum_{i=1}^p\mat A_G \mat x_i = \sum_{i=1}^p\mu_i^G \mat x_i \neq \sum_{i=1}^p\mu_i^H \mat x_i = \sum_{i=1}^p\mat A_H \mat x_i = \mat A_H\mat j\]
        since the $\mat x_i$ are linearly independent, as required.
    \end{proof}

    \begin{example}
        \label{eg:differentWsameVecs}
        \Cref{prop:wG!=wH} establishes a non-implication. However, even though it is proven in general, we must ensure that it is not vacuously true.
        
        The graphs $G$ and $H$ in \cref{fig:differentWalkMatrices} have the same principal main eigenvectors 
        \[(\tfrac{1}{2}(-1\pm\sqrt 5), \tfrac{1}{2}(-1\pm\sqrt 5), \tfrac{1}{2}(-1\pm\sqrt 5), \tfrac{1}{2}(-1\pm\sqrt 5), 1, 1, 1, 1), \]
        but their walk matrices are 
        \begingroup
        \renewcommand{\arraystretch}{0.7}
        \[\mat W_G = \begin{pmatrix}
        1 & 2 \\
        1 & 2 \\
        1 & 2 \\
        1 & 2 \\
        1 & 4 \\
        1 & 4 \\
        1 & 4 \\
        1 & 4 
        \end{pmatrix}\qquad \text{and} \qquad \mat W_H = \begin{pmatrix}
        1 & 3 \\
        1 & 3 \\
        1 & 3 \\
        1 & 3 \\
        1 & 6 \\
        1 & 6 \\
        1 & 6 \\
        1 & 6 \\
        \end{pmatrix}. \]
        \endgroup
        Indeed, their main eigenvalues are different. The graph $G$ has main eigenvalues $1\pm \sqrt 5$, whereas $H$ has main eigenvalues $\frac{3}{2}(1\pm\sqrt5)$.
        \begin{figure}
            \centering
            \tikzstyle{every node}=[circle, draw=black, fill=white, inner sep=0pt, minimum width=8pt, line width = 0.3mm]
            \begin{minipage}[b]{0.5\textwidth}
                \centering
                \begin{tikzpicture}[thick, scale=0.8]
                    \draw  (0,3)      node (1) {};
                    \draw  (3,0)      node (2) {};
                    \draw  (0,-3)     node (3) {};
                    \draw  (-3,0)     node (4) {};
                    \draw  (-1,0.75)  node (5) {};
                    \draw  (1,0.75)   node (6) {};
                    \draw  (1,-0.75)  node (7) {};
                    \draw  (-1,-0.75) node (8) {};
                    
                    \draw[line width = 0.3mm]{
                        (1) -- (5)
                        (1) -- (6)
                        (2) -- (6)
                        (2) -- (7)
                        (3) -- (7)
                        (3) -- (8)
                        (4) -- (5)
                        (4) -- (8)
                        (5) -- (7)
                        (5) -- (8)
                        (6) -- (7)
                        (6) -- (8)
                    };
                \end{tikzpicture}
                \caption*{Graph $G$}
            \end{minipage}
            \begin{minipage}[b]{0.49\textwidth}
                \centering
                \begin{tikzpicture}[thick, scale=0.8]
                    \draw  (-2.5,2.5)   node (1) {};
                    \draw  (2.5,2.5)    node (3) {};
                    \draw  (2.5,-2.5)   node (2) {};
                    \draw  (-2.5,-2.5)  node (4) {};
                    \draw  (-1,1)       node (5) {};
                    \draw  (1,1)        node (6) {};
                    \draw  (1,-1)       node (7) {};
                    \draw  (-1,-1)      node (8) {};
                    
                    \draw[line width = 0.3mm]{
                        (1) -- (5) 
                        (1) -- (6) 
                        (1) -- (8)
                        (3) -- (5) 
                        (3) -- (6) 
                        (3) -- (7)
                        (4) -- (5) 
                        (4) -- (8) 
                        (4) -- (7)
                        (2) -- (6) 
                        (2) -- (8) 
                        (2) -- (7)
                        (5) -- (6) 
                        (5) -- (8) 
                        (5) -- (7)
                        (6) -- (8) 
                        (6) -- (7) 
                        (8) -- (7)
                    };
                \end{tikzpicture}
                \vfil
                \caption*{Graph $H$}
            \end{minipage}
            \caption{Graphs $G$ and $H$ have the same principal main eigenvectors, but have different walk matrices.}
            \label{fig:differentWalkMatrices}
        \end{figure}
    \end{example}
    
    On the other hand, the same principal main eigenvalues and eigenvectors yield the same $k$-walk matrix for any $k$, and the proof is identical:
    
    \begin{theorem}
        \label{thm:sameEigsandVecs}
        Let $k \in \Nats$, and suppose $G$ and $H$ are two comain graphs with the same principal main eigenvectors. Then \[\mat W_G(k) = \mat W_H(k).\]
    \end{theorem}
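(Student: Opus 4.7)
The strategy is a straightforward adaptation of the calculation used in the proof of \cref{prop:wG!=wH}: expand each column $\mat A^\ell\mat j$ of the walk matrix via the spectral decomposition and observe that, under the stronger hypothesis here, the two expansions coincide term by term.

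First, since both $G$ and $H$ have the same $p$ main eigenvalues $\mu_1,\dots,\mu_p$ and share the same principal main eigenvectors $\mat x_i \defeq \mat P_i^G\,\mat j = \mat P_i^H\,\mat j$, I would begin by recording the identity $\mat j = \sum_{i=1}^p \mat x_i$, which holds because every non-main eigenspace is orthogonal to $\mat j$ and hence contributes nothing to the decomposition of $\mat j$. From this and $\mat A_G\mat x_i = \mu_i\mat x_i = \mat A_H\mat x_i$ it follows by a trivial induction on $\ell$ that
\[\mat A_G^{\,\ell}\mat j \;=\; \sum_{i=1}^p \mu_i^{\ell}\,\mat x_i \;=\; \mat A_H^{\,\ell}\mat j\]
for every $\ell \geq 0$.

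Applying this with $\ell = 0,1,\dots,k-1$ shows that the columns of $\mat W_G(k)$ and $\mat W_H(k)$ agree, giving the required equality. The argument works uniformly for all $k \in \Nats$, so no separate treatment of small $k$ is needed.

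There is really no obstacle here: once the hypothesis of equal $\mu_i$ and equal $\mat x_i$ is in place, the computation is the same one that already appeared (under a weaker hypothesis) in \cref{prop:wG!=wH}, but now with the $\mu_i^G = \mu_i^H$ equality forcing the two sums to match rather than differ. The only point worth being explicit about is the justification of $\mat j = \sum_i \mat x_i$, which uses the definition of non-main eigenvalue together with the orthogonal decomposition $\Reals^n = \bigoplus_{i=1}^s \eigsp_G(\mu_i)$.
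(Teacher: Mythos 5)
Your proposal is correct and follows essentially the same route as the paper's proof: decompose $\mat j = \mat x_1 + \cdots + \mat x_p$ into the shared principal main eigenvectors and compute $\mat A_G^{\ell}\mat j = \sum_i \mu_i^{\ell}\mat x_i = \mat A_H^{\ell}\mat j$ column by column. The only difference is that you spell out the justification of $\mat j = \sum_i \mat x_i$ and the induction on $\ell$, which the paper leaves implicit.
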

    
    \begin{proof}
        Suppose $G$ and $H$ have main eigenvalues $\mu_1,\dots,\mu_p$, and corresponding principal main eigenvectors $\mat x_1\dots,\mat x_p$. We can write $\mat j$ as $\mat x_1+\cdots+\mat x_p$.
        Now the $\ell$th column of $\mat W_G(k)$ is the vector $\mat A_G^{\ell-1}\mat j$, so
        \[\mat A_G^{\ell-1}\mat j = \sum_{i=1}^p\mat A_G^{\ell-1}\mat x_i = \sum_{i=1}^p\mu_i^{\ell-1}\mat x_i =\sum_{i=1}^p\mat A_H^{\ell-1}\mat x_i = \mat A_H^{\ell-1}\mat j,\]
        i.e., the $\ell$th column of $\mat W_H(k)$.
    \end{proof}
    
    Finally we elaborate on what is meant by ``related walk matrices'' in \cref{fig:charaterisation}. 
    
    \begin{theorem}
        \label{thm:relatedWalkMatrices}
        Let $G$ and $H$ be two graphs. Then        
        $\Main(G) = \Main(H)$ if and only if there is an invertible matrix $\mat Q$ such that $\mat W_G\mat Q = \mat W_H$.
    \end{theorem}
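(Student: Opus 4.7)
The plan is to exploit Theorem~\ref{thm:colsOfWareBasis}, which tells us that the columns of $\mat W_G$ (respectively $\mat W_H$) form a basis for $\Main(G)$ (respectively $\Main(H)$). Once we have this in hand, both directions reduce to the standard linear-algebraic fact that two $n\times p$ matrices have the same column space if and only if they are related by an invertible $p\times p$ change-of-basis matrix acting on the right.

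For the reverse implication, I would suppose that $\mat W_G\mat Q=\mat W_H$ for some invertible $\mat Q$. Then every column of $\mat W_H$ is a linear combination of columns of $\mat W_G$, and (using $\mat Q^{-1}$) vice versa, so $\colsp\mat W_G=\colsp\mat W_H$. By Theorem~\ref{thm:colsOfWareBasis}, these column spaces equal $\Main(G)$ and $\Main(H)$ respectively, hence $\Main(G)=\Main(H)$.

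For the forward implication, I would start by observing that the equality $\Main(G)=\Main(H)$ forces the dimensions to agree, so $G$ and $H$ have the same number $p$ of distinct main eigenvalues. Consequently $\mat W_G$ and $\mat W_H$ are both $n\times p$ matrices, and by Theorem~\ref{thm:colsOfWareBasis} their columns give two bases for the same $p$-dimensional subspace of $\Reals^n$. A standard change-of-basis argument then yields an invertible $p\times p$ matrix $\mat Q$ with $\mat W_G\mat Q=\mat W_H$; concretely, the $i$th column of $\mat Q$ records the coordinates of $\mat A_H^{i-1}\mat j$ with respect to the basis $\{\mat j,\mat A_G\mat j,\dots,\mat A_G^{p-1}\mat j\}$.

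There is no real obstacle here once Theorem~\ref{thm:colsOfWareBasis} is invoked; the only point worth flagging is the bookkeeping observation that $\Main(G)=\Main(H)$ forces $p_G=p_H$, so that $\mat W_G$ and $\mat W_H$ have matching shapes and it is sensible to ask for a \emph{square} invertible $\mat Q$.
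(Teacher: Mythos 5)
Your proposal is correct and follows essentially the same route as the paper: both directions hinge on \cref{thm:colsOfWareBasis} (the columns of each walk matrix form a basis of the corresponding main eigenspace), after which the statement is the standard change-of-basis fact about matrices with equal column spaces. Your explicit remark that $\Main(G)=\Main(H)$ forces $p_G=p_H$, so that $\mat Q$ is genuinely square, is a small bookkeeping point the paper leaves implicit, but the argument is otherwise the same.
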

    
    \begin{proof}
        If $\Main(G) = \Main(H)$, then the column vectors of $\mat W_G$ and $\mat W_H$ form  bases for the same space by \cref{thm:colsOfWareBasis}. In particular, the columns of $\mat W_H$ can be expressed as a linear combination of those of $\mat W_G$. Indeed, if the $i$th column $\mat c_i$ is $\alpha_{i1}\mat j + \alpha_{i2}\mat A_G\mat j + \cdots + \alpha_{ip}{\mat A_G}^{p-1}\mat j$, then
        \begin{align*}
        \mat W_H = \begin{pmatrix}
        |&|&&|\\
        \mat c_1 & \mat c_2 & \cdots & \mat c_p\\
        |&|&&|
        \end{pmatrix} = \begin{pmatrix}
        |&|&&|\\
        \mat j &\mat A_G \mat j & \cdots & {\mat A_G}^{p-1}\mat j\\
        |&|&&|
        \end{pmatrix}
        \underbrace{\begin{pmatrix}
            \alpha_{11} & \cdots & \alpha_{1p}\\
            \vdots & \ddots & \vdots\\
            \alpha_{p1} & \cdots & \alpha_{pp}
            \end{pmatrix}}_{=\mat Q}.
        \end{align*}     
        $\mat Q$ must be invertible, since otherwise $\rank(\mat W_H)\neq p$.   
        
        Now for the converse, in $\mat W_H = \mat W_G\mat Q$ the column vectors of $\mat W_G$ are combined linearly by $\mat Q$ so they are still members of $\Main(G)$. Since $\mat Q$ is invertible, none of the columns of $\mat W_G$ become linearly dependent, so they still span all of $\Main(G)$. Thus $\Main(H) = \Main(G)$.
    \end{proof}
    
    \begin{example}
        \label{example:3137}
        An example of graphs having related walk matrices is given in \cref{fig:3137}. These correspond to graphs 31 and 37 from \plaincite{6vertices}, and were pointed out by Jeremy Curmi.\cite{Jeremy}

        \begin{figure}
            \centering
            \tikzstyle{every node}=[circle, draw=black, fill=white, inner sep=0pt, minimum width=8pt, line width = 0.3mm]
            \begin{minipage}[b]{0.5\textwidth}
                \centering
                \begin{tikzpicture}[thick, scale=0.8]
                \draw  (-2,1.5)    node (1) {};
                \draw  (-2,-1.5)   node (2) {};
                \draw  (2,1.5)     node (3) {};
                \draw  (2,-1.5)    node (4) {};
                \draw  (0,0.75)    node (5) {};
                \draw  (0,-0.75)   node (6) {};
                
                \draw[line width = 0.3mm]{
                    (1) -- (5)
                    (1) -- (6)
                    (2) -- (5)
                    (2) -- (6)
                    (3) -- (5)
                    (3) -- (6)
                    (4) -- (5)
                    (4) -- (6)
                    (5) -- (6)
                };
                \end{tikzpicture}
                \caption*{Graph $G$}
            \end{minipage}
            \begin{minipage}[b]{0.49\textwidth}
                \centering
                \begin{tikzpicture}[thick, scale=0.8]
                \draw  (-2,1.5)    node (1) {};
                \draw  (-2,-1.5)   node (2) {};
                \draw  (2,1.5)     node (3) {};
                \draw  (2,-1.5)    node (4) {};
                \draw  (0,0.75)    node (5) {};
                \draw  (0,-0.75)   node (6) {};
                
                \draw[line width = 0.3mm]{
                    (1) -- (5)
                    (1) -- (6)
                    (2) -- (5)
                    (2) -- (6)
                    (3) -- (5)
                    (3) -- (6)
                    (4) -- (5)
                    (4) -- (6)
                    (1) -- (2)
                    (3) -- (4)
                };
                \end{tikzpicture}
                \vfil
                \caption*{Graph $H$}
            \end{minipage}
            \caption{Graphs $G$ and $H$ have related walk matrices.}
            \label{fig:3137}
        \end{figure}
        Indeed, we have
        \begingroup
        \renewcommand{\arraystretch}{0.7}
        \[\mat W_G = \begin{pmatrix}
        1 & 2 \\
        1 & 2 \\
        1 & 2 \\
        1 & 2 \\
        1 & 5 \\
        1 & 5 
        \end{pmatrix} = 
        \begin{pmatrix}
        1 & 3 \\
        1 & 3 \\
        1 & 3 \\
        1 & 3 \\
        1 & 4 \\
        1 & 4 \\
        \end{pmatrix}\begin{pmatrix}
        1&-7\\
        0&3
        \end{pmatrix} = \mat W_H \begin{pmatrix}
        1&-7\\
        0&3
        \end{pmatrix} = \mat W_H\mat Q.\]
        \endgroup
        This same pair of graphs also serves as a counterexample to the following: having the same main eigenspace does not necessarily mean that they have the same main eigenvectors. Indeed, the principal main eigenvectors of $G$ are 
        $(1,1,1,1,\tfrac{1}{4}(1\pm\sqrt {33}),\tfrac{1}{4}(1\pm\sqrt {33})),$
        whereas those of $H$ are 
        $(1,1,1,1,\tfrac{1}{4}(-1\pm\sqrt {33}),\tfrac{1}{4}(-1\pm\sqrt {33})).$
    \end{example}
    
    The graphs in \cref{eg:differentWsameVecs} (\cref{fig:differentWalkMatrices}) also have related walk matrices: $\mat W_G = \left(\begin{smallmatrix}
    1&0\\
    0&\sfrac23
    \end{smallmatrix}\right) \mat W_H$.
    
    We end with a question which, if has a positive answer, would link CDC's more intimately to their main eigenvalues.
    
    \begin{question}
        \label{question:CDC=>comain}
        Let $G$ and $H$ be two graphs with $\CDC(G)\simeq \CDC(H)$. Do $G$ and $H$ have the same main eigenvalues?
    \end{question}
    
    \subsection{Finding Graphs with the same CDC}
    A simple C program was written which made use of the list of non-isomorphic graphs on 8 vertices available on Brendan McKay's website.\cite{McKay} First, the large search space of $\binom{12\,346}{2}=76\,205\,685$ pairs of non-isomorphic graphs was significantly reduced to 1\,595 pairs of graphs which are comain using the QR algorithm. This was the most intensive step computationally---it took an ordinary Linux home desktop around 25 minutes. Then another program simply found the $\CDC$'s of each of the graphs which remained, and these were compared pairwise to check for isomorphism. This took around 5 seconds, and produced 32 pairs of non-isomorphic graphs. These included all pairs on less than 8 vertices implicitly, because such pairs appear with isolated vertices added to both (by \cref{lemma:NoIsolatedVertices}).
    
    Even though the algorithm we constructed narrows the search space to consider only graphs which are comain, the list is still exhaustive; because it was determined by brute force that there are no counterexamples to the conjecture implied by \cref{question:CDC=>comain} on at most 8 vertices.

    \nocite{*}
    \bibliographystyle{unsrt}
    \bibliography{references}

\end{document}